\newcommand{\C}{\mathbb{C}}
\newcommand{\OO}{\mathcal{O}}
\newtheorem{prop}{Proposition}[section]
\newtheorem{thm}[prop]{Theorem}
\newtheorem{cor}[prop]{Corollary}
\newtheorem{ex}[prop]{Example}
\newtheorem{defn}[prop]{Definition}
\newtheorem{question}[prop]{Question}
\newtheorem{rem}[prop]{Remark}
\newtheorem{lem}[prop]{Lemma}
\newcommand{\Z}{\mathbb{Z}}
\newcommand{\ia}{\mathfrak{a}}
\newcommand{\ib}{\mathfrak{b}}
\newcommand{\ic}{\mathfrak{c}}
\DeclareMathOperator{\Spec}{Spec}
\DeclareMathOperator{\Spf}{Spf}
\DeclareMathOperator{\edim}{edim}
\newcommand\reallywidehat[1]{%
\savestack{\tmpbox}{\stretchto{%
  \scaleto{%
    \scalerel*[\widthof{\ensuremath{#1}}]{\kern-.8pt\bigwedge\kern-.8pt}%
    {\rule[-\textheight/2]{1ex}{\textheight}}
  }{\textheight}%
}{0.85ex}}%
\stackon[1pt]{#1}{\tmpbox}%
}
\numberwithin{equation}{section}
\begin{document}

\title{On  the gluing of formal schemes}

\date{}

\author{R. A. Calixto}
\address{Universidade de S{\~a}o Paulo -
ICMC, 13560-970, S{\~a}o Carlos-SP, Brazil}
\email{rejicalixto@usp.br}

\author{T. H. Freitas}
\address{Universidade Tecnol\'ogica Federal do Paran\'a, 85053-525, Guarapuava-PR, Brazil}
\email{freitas.thf@gmail.com}

\author{V. H. Jorge P\'erez}
\address{Universidade de S{\~a}o Paulo -
ICMC, 13560-970, S{\~a}o Carlos-SP, Brazil}
\email{vhjperez@icmc.usp.br}


\keywords{schemes, formal schemes, gluing construction, singularities}
\subjclass[2010]{ 14A15, 14A05, 14A25}  
 
\begin{abstract}


The main focus of this paper is to show that the gluing of formal schemes is also a formal scheme. The algebraic approach established here also leads us to conclude when the gluing of $k$-formal schemes is a $k$-formal scheme.  In addition, we derive that the gluing of formal schemes is always singular, regardless of whether we know the structure of the schemes involved.





\end{abstract}

\maketitle

\section{Introduction}

The theory of schemes is one of the cornerstones of modern algebraic geometry. By generalizing classical algebraic geometry, it provides a more comprehensive and global approach to the study of varieties, including those with singularities and other complexities. Grothendieck \cite{EGAI}, in his seminal works such as ``Éléments de géométrie algébrique'', emphasizes the importance of overcoming the limitations of the classical approach, highlighting how this broader perspective propels the advancement of the theory.

In the context of abelian groups associated with schemes, this generalization significantly extends the theoretical tools available. The use of techniques such as group cohomology in the scheme context allows for a deeper understanding of the properties of these groups, as demonstrated in Grothendieck's works. The theory of schemes not only extends but also enriches algebraic geometry, providing a more comprehensive framework to understand both algebraic varieties and the groups associated with them.

In algebraic geometry, two important subfields arise from different approaches: if $k$ is a field, when considering the completion of the polynomial ring \( k[x_1, \dots, x_n] \) with respect to the \( (x_1, \dots, x_n) \)-adic topology, we obtain the ring of formal power series \( k[[x_1, \dots, x_n]] \). This construction is expanded when considering elements \( f_i \in k[[x_1, \dots, x_n]] \) (\( 1 \leq i \leq r \)) and exploring the solutions or zeros of these elements, leading to formal spaces.

Within \( k[[x_1, \dots, x_n]] \), when focusing on convergent formal power series or analytic functions, we arrive at another ring, denoted by \( k\{x_1, \dots, x_n\} \). Similarly, when \( f_i \in k\{x_1, \dots, x_n\} \) (\( 1 \leq i \leq r \)), the zeros of these elements generate the so-called analytic spaces. In the specific case where \( k = \mathbb{C} \), this type of space is extensively studied in the theory of complex singularities. It is important to note that there is a crucial distinction between affine algebraic varieties and formal (or analytic) affine spaces. Algebraic varieties have the Zariski topology, whereas formal or analytic spaces adopt the Hausdorff topology. This difference becomes critical in the study of singularities, as the Zariski topology is coarser, while the Hausdorff topology, being finer, better reflects the characteristics of singularities.

Just as the formal and analytic approaches are distinct, modern algebraic geometry is rich with branches, particularly in the context of the theory of schemes. This vast field is replete with open problems, many of which stem from investigations in classical algebraic geometry. 

In different types of categories (for instance for the category of schemes), the following question arises: Does the gluing of three objects in the same category remain in the same category? This  difficult question has some positive answers in the literature, for instance, when the objects are analytic spaces, $\C$-formal spaces and, in a more general situation, in the category of schemes (\cite{Israel, paperformal, KS}). In modern algebraic geometry, especially within the category of schemes, works such as \cite{artin, ferrand, orlov, KS} have been significant for this subject.


However, despite these efforts, there remains a considerable gap in exploring gluing constructions in both analytic and formal contexts, and even more so within the category of schemes. This work aims to address this gap by providing new perspectives on the issues raised in Question \ref{questionCM}, with an emphasis on the theory of formal schemes.

As a refinement of the study carried out in \cite{KS, Israel, paperformal, KS}, the main focus of this work is to give positive answers for the following questions: 

\begin{question}\label{questionCM}  Let \( \mathcal{X} \), \( \mathcal{Y} \), and \( \mathcal{Z} \) be formal schemes. Let denote by \( \mathcal{X} \sqcup_{\mathcal{Z}} \mathcal{Y} \) the gluing of \( \mathcal{X} \) and \( \mathcal{Y} \) along \( \mathcal{Z} \) (see Definition \ref{copro}).
\begin{enumerate} 
\item Is the gluing \( \mathcal{X} \sqcup_{\mathcal{Z}} \mathcal{Y} \) also a formal scheme?
\item If \( \mathcal{X} \), \( \mathcal{Y} \), and \( \mathcal{Z} \) regular, what we can say about the regularity of  \( \mathcal{X} \sqcup_{\mathcal{Z}} \mathcal{Y} \)?
\end{enumerate}
\end{question}

To address the issues raised in items (1) and (2) of Question \ref{questionCM}, it is necessary to introduce some topological/algebraic concepts, such as topological rings, linearly topologized rings, admissible rings and fiber product rings. Actually, the notion of fiber product rings plays a fundamental role in this paper, mainly for the investigation of the class of admissible rings. As a biproduct of the present investigation and a key result for the approach carried out, we show that the fiber product of admissible rings is also admissible
(Theorem \ref{fibercomplete}).




We briefly describe
 the contents of the paper.  Section \ref{admissible} contains the fundamental results about topological rings, focusing on linearly topologized, admissible, and adic rings. The section includes characterizations of admissible rings, namely, we prove that the category of admissible ideals has fiber products (Theorem \ref{fibercomplete}) and their relationship with projective limits of discrete rings. We also discuss examples to illustrate these definitions.

Section \ref{formal} gives the main results concerning the gluing of formal schemes. We begin with the definitions of formal spectra and formal schemes, preparing the ground for discussions on the topology of these spaces and the notion of gluing of formal schemes.
We present our main structural result, which shows the truth of  Question \ref{questionCM} (1) (see Theorem \ref{gluingFS}). 

Section 4 shows the impact of these constructions on the noetherianess of the gluing of formal schemes (Theorem \ref{gluingFS2}) and on the gluing of $k$-formal schemes (Theorem \ref{corglueformalafi}). 
The last section is devoted to investigate the singular behavior of the gluing of formal schemes. Actually, we show that the gluing of formal schemes is always singular (see Theorem \ref{Singularformal}). This yields a complete answer for Question \ref{questionCM} (2).

\section{Preparatory facts and results}


In this section, first we recall the main concepts and results for the rest of the paper.  A key ingredient to the algebraic approach employed in this paper is the concept of fiber product rings. In order to study the gluing of formal schemes, we will show that the category of admissible rings has fiber products, which is the main result of this section.

\subsection*{Basic concepts} We say $R$ is a \textit{topological ring} if $R$ is endowed with a topology such that both addition and multiplication are continuous as maps $R \times R \to R$, where $R \times R$ has the product topology. 
We say $R$ is \textit{linearly topologized} if $0$ has a fundamental system of neighbourhoods consisting of ideals \footnote{Definition: 
If $x \in R$, then a \textit{fundamental system of neighborhoods} of $x$ is a nonempty set $\mathcal{M}$ of open neighborhoods of $x$ with the property that if $U$ is open and $x \in U$, then there is $V \in \mathcal{M}$ such that $V \subseteq U$.}. 
We say $R$ is \textit{linearly topologized} if $0$ has a fundamental system of neighborhoods consisting of ideals. For example, let \( \{\mathfrak{a}_\lambda\}_{\lambda \in \Lambda} \) be a directed system of ideals of \( R \), where \( \Lambda \) is a directed set. Then the inverse limit topology  \( \varprojlim_{\lambda \in \Lambda} R/\mathfrak{a}_\lambda \) is linearly topologized, with a fundamental system of neighborhoods of \( 0 \) given by \( \ker(R \to R/\mathfrak{a}_\lambda) \).

Let \( R \) be a linearly topologized ring, and let \( \{\mathfrak{a}_\lambda\}_{\lambda \in \Lambda} \) be a fundamental system of open ideals in \( R \). Defining \( \lambda \geq \lambda' \iff \mathfrak{a}_\lambda \subseteq \mathfrak{a}_{\lambda'} \), we see that \( \Lambda \) is a directed set. We then obtain a natural homomorphism of linearly topologized rings:
\[
c: R \to \varprojlim_{\lambda \in \Lambda} R/\mathfrak{a}_\lambda.
\]
The ring \( R \) is {\it separated } (as a topological space) if and only if \( c \) is injective. We say that \( R \) is {\it complete} if \( c \) is an isomorphism. This condition is independent of the choice of the fundamental system of open ideals \( \{\mathfrak{a}_\lambda\}_{\lambda \in \Lambda} \). Observe that the completion \( \widehat{R}: = \varprojlim_{\lambda \in \Lambda} R/\mathfrak{a}_\lambda \) is independent of this choice and is always complete.

Let $R$ be a ring endowed with the topology defined by a descending filtration ${\mathcal{F}}=\{\ia_{\lambda}\}_{\lambda\in \Lambda}$ of ideals.

    An ideal $\ia\subseteq R$ is said to be an \textit {ideal of definition} of the linearly topologized ring $R$ if the following conditions are satisfied:
    \begin{itemize}
        \item[(i)] $\ia$ is \textit {open}; that is, there exists $\lambda\in \Lambda$ such that $\ia_\lambda \subseteq \ia$;
\item[(ii)] $\ia$ is \textit{topologically nilpotent}; that is, for any $\mu\in \Lambda$ there exists $n \geq 0$ such
that $\ia^n \subseteq \ia_{\mu}$.
 \end{itemize}


\begin{defn}{\rm 
Let \( R \) be a linearly topologized ring. We say that \( R \) is \textit{admissible} if \( R \) has an ideal of definition and is complete.}
\end{defn}

If \( R \) is a linearly topologized ring, we say that \( R \) is \textit{adic} if there exists an ideal of definition \( \mathfrak{a} \) such that the set \( \{\mathfrak{a}^n \mid n \geq 0\} \) forms a fundamental system of neighborhoods of \( 0 \), and \( R \) is complete. 

Note that an adic topological ring is the same as an admissible ring with an ideal of definition \( \mathfrak{a} \) such that \( \mathfrak{a}^n \) is open for all \( n \geq 1 \). 

\begin{ex} {\rm  
The ring \( k[[x_1, \dots, x_n]] \) is a classical example of an admissible ring and also an adic ring, with the ideal of definition given by \( \mathfrak{m} = (x_1, \dots, x_n) \).  

Now, consider the ring \( R:= k[[x, y]] \), equipped with the topology defined by the family of ideals \( \{(x y^n)\}_{n \in \mathbb{N}} \). In this case, \( R \) is admissible because \( \{(x y^n)\}_{n \in \mathbb{N}} \) forms a basis of neighborhoods of \( 0 \), and each \( (x y^n) \) is an ideal. However, \( R \) is not adic, since for \( i \geq 2 \) and \( n \geq 0 \), the ideal \( (x^i y^i) \) does not contain \( (x y^n) \). Consequently, \( (x^i y^i) \) is not open in the given topology. } 
\end{ex}  
\begin{lem}{\rm (\cite[0, Lem. 7.2.2]{EGAI})}. A topological ring \( R \) is admissible if and only if \( R \) is isomorphic to the limit 
$
\varprojlim R_i,
$
of some projective system of discrete rings,
$
R_1 \leftarrow R_2 \leftarrow \cdots,
$
such that for every \( i \), the map \( R_{i+1} \to R_i \) is surjective and every element of its kernel is nilpotent.
\end{lem}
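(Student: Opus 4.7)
The plan is to prove the equivalence in two directions, exploiting in both cases the description of a separated complete linearly topologized ring as $R \cong \varprojlim_{\lambda \in \Lambda} R/\mathfrak{a}_\lambda$, where $\{\mathfrak{a}_\lambda\}_{\lambda \in \Lambda}$ is any fundamental system of open ideals, which is recalled in the preceding discussion.

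For the ``only if'' implication, I would assume $R$ is admissible and fix an ideal of definition $\mathfrak{a}$. Since $\mathfrak{a}$ is open, the subset $\Lambda' := \{\lambda \in \Lambda : \mathfrak{a}_\lambda \subseteq \mathfrak{a}\}$ is cofinal in $\Lambda$, so by completeness one still has $R \cong \varprojlim_{\lambda \in \Lambda'} R/\mathfrak{a}_\lambda$. The transition maps $R/\mathfrak{a}_{\lambda'} \twoheadrightarrow R/\mathfrak{a}_\lambda$ (for $\mathfrak{a}_{\lambda'} \subseteq \mathfrak{a}_\lambda$) are surjective, and their kernel $\mathfrak{a}_\lambda/\mathfrak{a}_{\lambda'}$ consists of nilpotent elements: if $x \in \mathfrak{a}_\lambda \subseteq \mathfrak{a}$, topological nilpotency of $\mathfrak{a}$ yields some $n$ with $\mathfrak{a}^n \subseteq \mathfrak{a}_{\lambda'}$, whence $x^n \in \mathfrak{a}_{\lambda'}$, i.e.\ $x^n = 0$ in $R/\mathfrak{a}_{\lambda'}$. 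This realises $R$ as a projective limit of the desired form.

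For the converse, given $R \cong \varprojlim R_i$ as in the statement, I would equip $R$ with the inverse limit topology. Then $R$ is complete by construction, and the ideals $\mathfrak{a}_i := \ker(R \to R_i)$ form a fundamental system of open ideals; it only remains to produce an ideal of definition. My candidate is $\mathfrak{a}_1$. Condition (i) is immediate. For (ii), surjectivity of the transition maps $R_{i+1} \to R_i$ forces $R \to R_i$ to be surjective (Mittag--Leffler), so the image of $\mathfrak{a}_1$ in $R_i$ is exactly $N_i := \ker(R_i \to R_1)$, and the inclusion $\mathfrak{a}_1^n \subseteq \mathfrak{a}_i$ reduces to checking $N_i^n = 0$ in $R_i$. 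To verify this I would induct on $i$, using the short exact sequences
\[
0 \to \ker(R_{i+1} \to R_i) \to N_{i+1} \to N_i \to 0,
\]
and the hypothesis on the left-hand kernel to promote a nilpotency bound for $N_i$ to one for $N_{i+1}$.

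The main obstacle is precisely this last inductive step: upgrading the pointwise-nilpotent condition on the transition kernel $\ker(R_{i+1} \to R_i)$ to genuine ideal-nilpotency of $N_{i+1}$. The step is routine if one interprets the hypothesis as saying each transition kernel is a nilpotent ideal, i.e.\ $\ker(R_{i+1} \to R_i)^{m_i} = 0$: then, combined with a bound $N_i^{n_i} = 0$ in $R_i$, expanding products along the extension yields $N_{i+1}^{m_i n_i} = 0$. Under the weaker pointwise reading taken literally, one must either reduce to the ideal-nilpotent case (using that only finitely many generators are needed to witness a given power) or refine the choice of ideal of definition to a suitable nil-subideal of $\mathfrak{a}_1$; this is the one genuinely delicate point, and the rest of the argument is formal manipulation of the inverse system.
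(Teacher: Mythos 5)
The paper offers no proof of this lemma: it is stated purely as a citation of EGA~\cite{EGAI}, so there is nothing internal to compare your argument against and it has to stand on its own. Your ``only if'' direction is correct and is the standard argument: the open ideals contained in an ideal of definition $\mathfrak{a}$ are cofinal, the quotient transition maps are surjective, and topological nilpotence of $\mathfrak{a}$ makes every element of $\mathfrak{a}_\lambda/\mathfrak{a}_{\lambda'}$ nilpotent. One small mismatch: your construction produces a projective system over a general directed set, whereas the statement insists on a sequence $R_1\leftarrow R_2\leftarrow\cdots$; since an admissible ring need not have a countable fundamental system of open ideals, the sequential form of the statement is itself too restrictive and your directed-set version is the right one.

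The converse is where the genuine gap lies, and you have located it precisely but your two proposed repairs do not close it. The hypothesis ``every element of the kernel is nilpotent'' only says the kernel is a nil ideal, and the idea that ``only finitely many generators are needed to witness a given power'' fails exactly for non--finitely generated nil ideals. In fact the statement taken literally is false: take $R_i=k[x_1,x_2,\dots]/\bigl((x_kx_l)_{k\neq l}+(x_j^{\,i^j})_{j\geq 1}\bigr)$ with the obvious surjections $R_{i+1}\to R_i$. Every element of each transition kernel involves only finitely many variables and is therefore nilpotent, yet for all $j<i$ and all $n$ one has $x_l^{\,n j^l}\notin\ker(k[x_1,x_2,\dots]\to R_i)$ for $l$ large, so $\ker(R_i\to R_j)^n\neq 0$; consequently no open ideal of $\varprojlim R_i$ is topologically nilpotent and the limit is not admissible. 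So the converse genuinely requires the stronger hypothesis that each transition kernel is a \emph{nilpotent ideal}, under which your induction along the extensions $0\to\ker(R_{i+1}\to R_i)\to N_{i+1}\to N_i\to 0$ does go through and yields $N_{i+1}^{m_in_i}=0$. To use the lemma safely you should either prove it under that strengthened hypothesis or quote EGA's exact formulation rather than the paraphrase given here.
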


To be or not an ideal of definition $\ia \subseteq R$,  depends on the topology being considered in $R$. See the following example.
\begin{ex}{\rm
Consider the ring $R = \Z_p[x]$. We have that $R$ is a topological ring with the following two topologies:
\begin{itemize}
\item[(1)] the topology where a neighborhood basis for $0$ is given by $\mathcal{F}=\{(x)^n\}$; and
\item[(2)] the topology where a neighborhood basis for $0$ is given by $\mathcal{S}=\{(p)^n\}.$
\end{itemize}

Note that these two topologies are not the same. A easiest way to see this is to take their respective completions: the completion of the former is the ring $\Z_p[[x]]$, whereas the completion of the latter is $\Z_p\{x\}$ (the ring of restricted formal power series).

If we consider $R$ with the topology defined by $\mathcal{F}$, then $(x)$ is an ideal of definition of $R$. However, if we consider $R$ with the topology defined by $\mathcal{S}$, then $(x)$ is no longer an ideal of definition of $R$.
}
\end{ex}

The following examples illustrate rings that have an ideal of definition, their topology is $\ia$-adic, but they are not adic rings.

\begin{ex} {\rm
 \begin{itemize}
     \item[(1)] The ring $R[x]$, where a basis of neighborhoods of $0$ is given by powers of the ideal $(x)$. As before, more generally, $R[x_1,\dots,x_n]$ with a neighborhood basis of $0$ given by powers of the ideal $(x_1,\dots, x_n)$. 
\item[(2)] $\Z$ equipped with the $m$-adic topology, for any $m \in \Z$ (of course, the cases of primary
interest are when $m$ is in fact a prime $p$).
\end{itemize}
}
\end{ex}

\begin{ex} {\rm The following rings are adic:
\begin{itemize}
\item[(1)]  Any commutative ring under the discrete topology. With a neighborhood basis is $\{(0)\}$, which is also $\{(0)^n\}$.
\item[(2)] $R[[x]]$ (resp. $R[[x_1,\dots,x_n]])$, with a neighborhood basis $\{(x)^k\}$ (resp. $\{(x_1,\dots,x_n)^k\}$). This is just the completion of the first ring mentioned above.
\end{itemize}
}
\end{ex}

\begin{thm}\label{complete1}
 Let $R$ be a ring that admits an ideal of definition, and $\{\ia_{\lambda}\}_{\lambda\in \Lambda}$ be a fundamental system of ideals of definition. Then \( \varprojlim R / \ia_\lambda \) is admissible.
\end{thm}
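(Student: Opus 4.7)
My plan is to verify the three defining conditions of admissibility for $\widehat R := \varprojlim_{\lambda \in \Lambda} R/\mathfrak{a}_\lambda$: that it is linearly topologized, that it is complete, and that it possesses an ideal of definition. First I endow $\widehat R$ with the inverse limit topology (each $R/\mathfrak{a}_\lambda$ being discrete) and set $\widehat{\mathfrak{a}}_\lambda := \ker(\widehat R \to R/\mathfrak{a}_\lambda)$. By construction these are ideals of $\widehat R$ and form a fundamental system of open neighborhoods of $0$, so $\widehat R$ is linearly topologized with no further work.

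For completeness, I would observe that the transition maps $R/\mathfrak{a}_\mu \to R/\mathfrak{a}_\lambda$ (whenever $\mathfrak{a}_\mu \subseteq \mathfrak{a}_\lambda$) are the canonical surjections, so the Mittag--Leffler condition holds automatically and the structural projections $\widehat R \to R/\mathfrak{a}_\lambda$ are surjective. Hence $\widehat R / \widehat{\mathfrak{a}}_\lambda \cong R / \mathfrak{a}_\lambda$, and passing to the inverse limit returns the canonical isomorphism $\widehat R \cong \varprojlim \widehat R / \widehat{\mathfrak{a}}_\lambda$, which is precisely the completeness of $\widehat R$ relative to its own neighborhood system.

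The substantive step is exhibiting an ideal of definition. I fix any $\lambda_0 \in \Lambda$ and claim $\widehat{\mathfrak{a}}_{\lambda_0}$ itself works: openness is automatic. For topological nilpotence, given $\mu \in \Lambda$, the hypothesis that $\mathfrak{a}_{\lambda_0}$ is an ideal of definition of $R$ furnishes an integer $n$ with $\mathfrak{a}_{\lambda_0}^{\,n} \subseteq \mathfrak{a}_\mu$, and the hard part will be upgrading this to $\widehat{\mathfrak{a}}_{\lambda_0}^{\,n} \subseteq \widehat{\mathfrak{a}}_\mu$. Given $\mathbf{r}_1,\dots,\mathbf{r}_n \in \widehat{\mathfrak{a}}_{\lambda_0}$, I would pick, by directedness, $\nu \in \Lambda$ with $\mathfrak{a}_\nu \subseteq \mathfrak{a}_{\lambda_0} \cap \mathfrak{a}_\mu$; since each $\mathbf{r}_i$ has zero coordinate at $\lambda_0$, any lift $\tilde r_{i,\nu} \in R$ of the $\nu$-coordinate lies in $\mathfrak{a}_{\lambda_0}$ and its image in $R/\mathfrak{a}_\mu$ equals $r_{i,\mu}$. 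The product $\tilde r_{1,\nu} \cdots \tilde r_{n,\nu}$ then lies in $\mathfrak{a}_{\lambda_0}^{\,n} \subseteq \mathfrak{a}_\mu$, so the $\mu$-coordinate of $\mathbf{r}_1 \cdots \mathbf{r}_n$ vanishes, and extending by bilinearity to finite sums yields the required inclusion. This coordinate-by-coordinate lifting, bridged by the auxiliary index $\nu$, is the only nontrivial point of the argument; once established, topological nilpotence of $\widehat{\mathfrak{a}}_{\lambda_0}$ is an immediate consequence of the topological nilpotence of $\mathfrak{a}_{\lambda_0}$ in $R$.
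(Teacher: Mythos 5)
Your proof is correct, and it is worth noting that it is substantially more complete than the one in the paper. The paper's argument consists of citing \cite[Proposition 7.1.8(2)]{FK} for separatedness, completeness, and the topological ring structure of $\widehat{R}$, and then stating that ``it is enough to show that $\hat{\ia}_\lambda := \ker \pi_\lambda$ is the ideal of definition of $\widehat{R}$'' --- without actually carrying out that verification. Your coordinate-lifting argument for topological nilpotence (choosing an auxiliary index $\nu$ with $\ia_\nu \subseteq \ia_{\lambda_0} \cap \ia_\mu$, lifting the $\nu$-coordinates to elements of $\ia_{\lambda_0}$, and pushing the product down to $R/\ia_\mu$) is exactly the missing content, and it is sound. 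You also prove completeness directly rather than by citation, which is a gain in self-containedness. One small caution: your appeal to the Mittag--Leffler condition to get surjectivity of the projections $\widehat{R} \to R/\ia_\lambda$ is not quite right for an arbitrary directed index set $\Lambda$ (surjective transition maps guarantee surjectivity of the projections only for countable, or countably cofinal, systems). The conclusion you need is nonetheless true for a simpler reason: the surjection $R \to R/\ia_\lambda$ factors through the canonical map $R \to \widehat{R}$ followed by $\pi_\lambda$, so $\pi_\lambda$ is surjective and $\widehat{R}/\hat{\ia}_\lambda \cong R/\ia_\lambda$ as you claim. With that one justification repaired, your argument stands on its own and fills the gap the paper leaves open.
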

\begin{proof}
   By \cite[Proposition 7.1.8(2)]{FK}, one has that  $\widehat{R}$  is separated, complete, and the inverse limit topology naturally makes $\widehat{R}$  a topological ring. To conclude, it is enough to show that $\hat{\ia}_{\lambda}:=\text{ker }\pi_{\lambda}$  is the ideal of definition of $\widehat{R}$.
\end{proof}

\begin{rem}\label{universalcompletion} {\rm The completion $\widehat{R}$ of a ring $R$ satisfies the universal property. In fact, the ring $\widehat{R}$, as defined above, satisfies the following:
with the previous notation, if $\varphi : R \to S$ is a  homomorphism of rings, where $S$ is an admissible ring,
 then there exists a unique continuous homomorphism $ \phi: \widehat{R}\to S$ such that the following diagram commutes:
$$
\xymatrix{
&R\ar[d]_{\varphi}\ar[r] & \widehat{R} \ar@{.>}[ld]^{{\phi}} \\
 &S&  }
$$
}
\end{rem}

\subsection*{Fiber product of admissible rings}\label{admissible}
A key ingredient for the rest of the paper is the notion of fiber product rings. Let $R$, $S$ and $T$ be
 commutative rings,  and let $R \stackrel{\pi_R}\longrightarrow T \stackrel{\pi_S}\longleftarrow S$ be homomorphisms of rings.  The fiber product ring is the set 
$$
R \times_T S=\{(r,s)\in R\times S  \ \mid \ \pi_R(r)=\pi_S(s)  \},
$$
 and it is a
subring of the usual direct product $R\times S$. In addition, if $R \stackrel{\pi_R}\longrightarrow T \stackrel{\pi_S}\longleftarrow S$  are surjective homomorphisms of rings,  \cite[Lemma 1.2]{AAM} (or \cite[Lemma 2.1]{shirogoto}) gives that  $R \times_T S$ is a Noetherian  and local ring provided $R$, $S$ and $T$ are Noetherian local rings. From now on, we assume that the fiber product $R\times_TS$ is non-trivial, i.e.,  $R\neq T\neq S$.

Let $\eta_R: R \times_T S\twoheadrightarrow R$
and $\eta_S:R \times_T S\twoheadrightarrow S$ be the natural
projections $(r,s)\mapsto r$ and $(r,s)\mapsto s$, respectively. Then $R \times_TS$ is represented as a pullback diagram:
\begin{equation}\label{dia}
\begin{CD}
R \times_T S & \stackrel{\eta_S}{\longrightarrow} & S\\
{\eta_R}{\downarrow} &  & {\downarrow}{\pi_S} \\
R & \stackrel{\pi_R}{\longrightarrow} & T.
\end{CD}
\end{equation}

\begin{rem}\label{dimdepth}{ \rm An important fact given in \cite{AAM} concerning   the fiber product ring  $R\times_TS$ is that,
$$\dim (R\times_TS) = {\rm max} \{ \dim (R), \dim (S) \},$$
$${\rm depth} (R\times_TS) \geq {\rm min} \{{\rm depth} (R), {\rm depth} (S), {\rm depth}(T)+ 1\}.$$

If $T=k$, then
$${\rm depth} (R\times_kS) = {\rm min} \{{\rm depth} (R), {\rm depth} (S), 1\} .$$
 
 }
\end{rem}

\begin{rem}\label{Union}
{\rm Now we recall some explicit forms to produce fiber products. 
\begin{enumerate}
\item[(i)] Let $R$ be a Noetherian local ring. Let $I$ and $J$ be two non-zero ideals of $R$. Note that the map $\psi: R \rightarrow R/I\times_{R/I+J} R/J$, given by $\psi(r)=(r+I,r+J)$, is surjective. Therefore  
$R/I\cap J \cong R/I\times_{R/I+J} R/J.$

\item[(ii)] Let $R = k[[x_1,\dots,x_n]]$ and $S = k[[y_1,\dots,y_m]]$ be two polynomial rings, where $k$ is a field. Let $I$ and $J$ be  ideals of $R$ and  $S$, respectively. Then 
 $$\frac{k[[x_1,\dots,x_n]]}{I}\times_k \frac{k[[y_1,\dots,y_m]]}{J}\cong \frac{k[[x_1,\dots,x_n, y_1,\dots,y_m]]}{(I+J+(x_iy_j))},$$
\end{enumerate}
}
\end{rem}



Consider  $R$, $S$ and  $T$  tree admissible rings, where $\varepsilon_R : R \rightarrow T,\varepsilon_S: S \rightarrow T$ are homomorphisms of rings. Let $\{\ia_{\lambda}\}_{\lambda\in \Lambda}$, $\{\ib_{\beta}\}_{\beta\in \Sigma}$ and $\{\ic_{\alpha}\}_{\alpha\in C}$  be a fundamental system of ideals of definition of $R$, $S$ and  $T$, respectively, such that $\ia_{\lambda}\subseteq \varepsilon_R^{-1}(\ic_{\alpha})$ and $\ib_{\beta}\subseteq\varepsilon_S^{-1}(\ic_{\alpha})$. The \textit{completed fiber product} of  $R$, $S$ and  $T$ is given by

$$\widehat{R\times_TS}=\varprojlim\limits_{\lambda, \beta, \alpha} R\times_TS/\ia_{\lambda}\times_{\ic_{\alpha}} \ib_{\beta},$$

where $\Lambda\times\Sigma\times C$ is the directed set given by
$$\Lambda\times\Sigma\times C=\{(\alpha,\beta,\alpha)\,|\,\varepsilon_R(\ia_{\lambda})\subseteq \ic_{\alpha}, \varepsilon_S(\ib_{\beta})\subseteq \ic_{\alpha}\},$$
considered with the ordering defined by

$$(\lambda,\beta, \alpha)\leq (\lambda',\beta', \alpha') \iff \lambda\leq \lambda', \beta\leq \beta' \mbox{ and } \alpha \leq \alpha'.$$ 

The ring $\widehat{R\times_TS}$ is the completion of the fiber product the ring ${R\times_TS}$ with respect to the topology defined  by the filtration $$H^{\alpha,\beta,\alpha}:=\{\ia_{\lambda}\times_{\ic_{\alpha}} \ib_{\beta}\}_{(\alpha,\beta,\alpha)\in \Lambda\times\Sigma\times C}.$$

For each $(\alpha,\beta,\alpha)\in \Lambda\times\Sigma\times C$, let $\widehat{H}^{\alpha,\beta,\alpha}$  be the closure of the image  of $H^{\alpha,\beta,\alpha}$ in $\widehat{R\times_TS}$. Then $\widehat{R\times_TS}$ is complete with respect to the topology defined by the filtration $\widehat{H}^{\alpha,\beta,\alpha}$ (see \cite[Proposition 7.1.8(2), p. 141]{FK}).

\begin{lem}\label{fiberpre-admi}
     Let $R$, $S$ and  $T$ be rings that admit ideals of definition, and let $\{\ia_{\lambda}\}_{\lambda \in \Lambda}$, $\{\ib_{\beta}\}_{\beta \in \Gamma}$ and $\{\ic_{\alpha}\}_{\alpha \in \Sigma}$ be a fundamental system of
ideals of definition of $R$, $S$ and $T$, respectively. Then the fiber product ring $P:=R\times_TS$ admits an ideal of definition, endowed with the
$\{\ia_{\lambda}\times_{\ic_{\alpha}} \ib_{\beta}\}$-topology.
\end{lem}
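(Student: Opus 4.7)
The plan is to directly exhibit an ideal of definition for $P := R \times_T S$. First, I would verify that the family $\{\mathfrak{a}_\lambda \times_{\mathfrak{c}_\alpha} \mathfrak{b}_\beta\}$, indexed over the directed set described just before the lemma, is a fundamental system of neighborhoods of $0$ for a linear topology on $P$. Each $\mathfrak{a}_\lambda \times_{\mathfrak{c}_\alpha} \mathfrak{b}_\beta$ is an ideal of $P$: it is additively closed, and if $(r,s) \in P$ and $(a,b) \in \mathfrak{a}_\lambda \times_{\mathfrak{c}_\alpha} \mathfrak{b}_\beta$, then $(ra, sb) \in \mathfrak{a}_\lambda \times \mathfrak{b}_\beta$ with $\varepsilon_R(ra) = \varepsilon_R(r)\varepsilon_R(a) = \varepsilon_S(s)\varepsilon_S(b) = \varepsilon_S(sb)$ and this common image lies in $\mathfrak{c}_\alpha$ by the compatibility condition. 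Directedness of the family reduces to directedness of $\Lambda$, $\Sigma$, $C$ combined with the compatibility conditions $\varepsilon_R(\mathfrak{a}_\lambda) \subseteq \mathfrak{c}_\alpha$ and $\varepsilon_S(\mathfrak{b}_\beta) \subseteq \mathfrak{c}_\alpha$.

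Next, I would fix ideals of definition $\mathfrak{a} := \mathfrak{a}_{\lambda_0}$ of $R$, $\mathfrak{b} := \mathfrak{b}_{\beta_0}$ of $S$, $\mathfrak{c} := \mathfrak{c}_{\alpha_0}$ of $T$, shrinking the indices if needed (using continuity of $\varepsilon_R, \varepsilon_S$) so that $\varepsilon_R(\mathfrak{a}) \subseteq \mathfrak{c}$ and $\varepsilon_S(\mathfrak{b}) \subseteq \mathfrak{c}$. Set $\mathfrak{J} := \mathfrak{a} \times_{\mathfrak{c}} \mathfrak{b}$. By construction $\mathfrak{J}$ belongs to the fundamental system, so it is open in $P$; the candidate for the ideal of definition is $\mathfrak{J}$.

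The main step is topological nilpotency. Given a basic open ideal $\mathfrak{a}_\mu \times_{\mathfrak{c}_\gamma} \mathfrak{b}_\nu$, since $\mathfrak{a}, \mathfrak{b}, \mathfrak{c}$ are ideals of definition there exist $n_1, n_2, n_3 \geq 1$ with $\mathfrak{a}^{n_1} \subseteq \mathfrak{a}_\mu$, $\mathfrak{b}^{n_2} \subseteq \mathfrak{b}_\nu$, $\mathfrak{c}^{n_3} \subseteq \mathfrak{c}_\gamma$. Setting $n := \max(n_1,n_2,n_3)$, an arbitrary generator of $\mathfrak{J}^n$ has the form $(r_1,s_1)\cdots(r_n,s_n) = (r_1\cdots r_n,\, s_1\cdots s_n)$ with $r_i \in \mathfrak{a}$ and $s_i \in \mathfrak{b}$. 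Its first coordinate sits in $\mathfrak{a}^n \subseteq \mathfrak{a}_\mu$, its second in $\mathfrak{b}^n \subseteq \mathfrak{b}_\nu$, and since the pair lies in $R \times_T S$ its common image $\varepsilon_R(r_1\cdots r_n) = \varepsilon_S(s_1\cdots s_n)$ lies in $\mathfrak{c}^n \subseteq \mathfrak{c}_\gamma$. Passing to sums, this gives $\mathfrak{J}^n \subseteq \mathfrak{a}_\mu \times_{\mathfrak{c}_\gamma} \mathfrak{b}_\nu$, as required.

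The main obstacle is essentially bookkeeping across the three indexing systems and establishing the key inclusion $(\mathfrak{a} \times_\mathfrak{c} \mathfrak{b})^n \subseteq \mathfrak{a}^n \times_{\mathfrak{c}^n} \mathfrak{b}^n$, which reduces topological nilpotency of $\mathfrak{J}$ in $P$ to the already available topological nilpotency of $\mathfrak{a}, \mathfrak{b}, \mathfrak{c}$ in $R, S, T$. A subtle point to handle carefully is ensuring that the initial indices $\lambda_0, \beta_0, \alpha_0$ can be chosen compatibly; this follows from the continuity of $\varepsilon_R, \varepsilon_S$ and the cofinality of the ideals of definition inside the fundamental system.
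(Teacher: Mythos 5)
Your proposal is correct and follows essentially the same route as the paper's own proof: both reduce topological nilpotency of a basic open ideal $\mathfrak{a}\times_{\mathfrak{c}}\mathfrak{b}$ in $R\times_T S$ to the componentwise inclusions $\mathfrak{a}^n\subseteq\mathfrak{a}_\mu$, $\mathfrak{b}^n\subseteq\mathfrak{b}_\nu$, $\mathfrak{c}^n\subseteq\mathfrak{c}_\gamma$ by expanding a product of $n$ generators coordinatewise. Your write-up is in fact the cleaner of the two, since you also check explicitly that the $\mathfrak{a}_\lambda\times_{\mathfrak{c}_\alpha}\mathfrak{b}_\beta$ are ideals, that the family is directed, and that the candidate ideal is open.
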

\begin{proof}It is sufficient  to show that, for any $\lambda,\beta, \alpha$ and $\lambda',\beta', \alpha'$,  there exists some $n$ so that $(\ia_{\lambda}\times_{\ic_{\alpha}} \ib_{\beta})^{2n}\subseteq \ia_{\lambda'}\times_{\ic_{\alpha'}} \ib_{\beta'}.$
 Then, any element of $\ia_{\lambda}\times_{\ic_{\alpha}} \ib_{\beta}$ can be written as $(r,s)\in \ia_{\lambda}\times \ib_{\beta}$ such that $\varepsilon_R(r)=\varepsilon_S(s)=t\in \ic_{\alpha}.$ So, if $n$ is such that $\ia_{\lambda}^{n}\subseteq \ia_{\lambda'}$, $\ib_{\beta}^{n}\subseteq \ib_{\beta'}$, $\ic_{\alpha}^{n}\subseteq \ic_{\alpha'}$; and $r\in \ia_{\lambda}^{n}$, $s\in \ib_{\beta}^{n}$,  $t\in \ic_{\alpha}^{n}$, then $r\in \ia_{\lambda}$, $s\in \ib_{\beta}$,  $t\in \ic_{\alpha}$. Hence,  $(\ia_{\lambda}\times_{\ic_{\alpha}} \ib_{\beta})^{2n}\subseteq \ia_{\lambda'}\times_{\ic_{\alpha'}} \ib_{\beta'}.$
\end{proof}

\begin{rem}\label{remfiberadmissible} 
{\rm    As a consequence of Lemma \ref{fiberpre-admi} and Theorem \ref{complete1}, we derive that $\widehat{R\times_TS}$ is an admissible ring. }
\end{rem}
As proposed at the beginning of the section,  one of the main results of this paper establishes that the category of admissible rings has fiber products.
\begin{thm}\label{fibercomplete}  Let $R$, $S$ and  $T$ be tree admissible rings. Then the pullback  (also called the fiber product) $R\times_TS$ is also an admissible ring.
\end{thm}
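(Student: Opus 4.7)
The plan is to verify the two defining conditions for admissibility of $P := R \times_T S$: the existence of an ideal of definition, and completeness. The first is exactly the content of Lemma \ref{fiberpre-admi}, so the substantive content of the theorem lies in showing that $P$ is already complete with respect to the filtration $\{\ia_\lambda \times_{\ic_\alpha} \ib_\beta\}$.

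For completeness, I would exploit the fact that fiber products, being themselves limits, commute with inverse limits in the category of rings. Since $R$, $S$, and $T$ are admissible, there are canonical isomorphisms $R \cong \varprojlim R/\ia_\lambda$, $S \cong \varprojlim S/\ib_\beta$, and $T \cong \varprojlim T/\ic_\alpha$. Interchanging the order of the limits yields
\[
P \;=\; R \times_T S \;\cong\; \varprojlim_{(\lambda,\beta,\alpha)} \bigl( R/\ia_\lambda \times_{T/\ic_\alpha} S/\ib_\beta \bigr),
\]
where the inverse limit is indexed by the same directed set that governs the fundamental system of definition ideals. As a consequence, $P$ inherits an inverse-limit topology whose fundamental neighborhoods of $0$ are precisely the kernels of the structure projections $P \to R/\ia_\lambda \times_{T/\ic_\alpha} S/\ib_\beta$. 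A direct calculation identifies this kernel: a pair $(r,s) \in P$ lies in it exactly when $r \in \ia_\lambda$ and $s \in \ib_\beta$, and the compatibility condition $\varepsilon_R(r) = \varepsilon_S(s) \in \ic_\alpha$ is automatic from the hypothesis $\ia_\lambda \subseteq \varepsilon_R^{-1}(\ic_\alpha)$ and $\ib_\beta \subseteq \varepsilon_S^{-1}(\ic_\alpha)$. Thus this kernel coincides with $\ia_\lambda \times_{\ic_\alpha} \ib_\beta$, and the inverse-limit topology agrees with the fiber-product topology of Lemma \ref{fiberpre-admi}. Since every inverse limit of rings is complete with respect to its projection topology, $P$ is complete, and combining this with Lemma \ref{fiberpre-admi} shows that $P$ is admissible.

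The main obstacle I expect is the careful bookkeeping around the commutation of limits: one must verify that the pullback diagrams $R/\ia_\lambda \to T/\ic_\alpha \leftarrow S/\ib_\beta$ assemble into a genuine inverse system over the directed index set $\{(\lambda,\beta,\alpha)\}$ --- with transition morphisms induced by the quotient maps --- and that the resulting limit coincides with $R \times_T S$ both as a ring and topologically. A secondary subtlety worth flagging is that the projections $P \to R/\ia_\lambda \times_{T/\ic_\alpha} S/\ib_\beta$ need not be surjective, since $\varepsilon_R$ and $\varepsilon_S$ are not assumed surjective; this however does not impede the completeness argument, which relies only on the identification of the kernels and the universal property of the inverse limit.
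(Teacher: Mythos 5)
Your proof is correct in substance, but it reaches the conclusion by a genuinely different mechanism than the paper. The paper first observes (via Lemma \ref{fiberpre-admi} and Theorem \ref{complete1}) that the completion $\widehat{R\times_TS}=\varprojlim (R\times_TS)/(\ia_\lambda\times_{\ic_\alpha}\ib_\beta)$ is admissible, and then argues that the canonical map $R\times_TS\to \widehat{R\times_TS}$ is an isomorphism by invoking the universal property of completion (Remark \ref{universalcompletion}) to produce projections $\widehat{R\times_TS}\to R$, $\widehat{R\times_TS}\to S$ over $T$, and then appealing to the uniqueness of pullbacks. You instead prove completeness directly: since limits commute with limits, $R\times_TS\cong\varprojlim\bigl(R/\ia_\lambda\times_{T/\ic_\alpha}S/\ib_\beta\bigr)$, and the kernels of the structure projections are exactly the ideals $\ia_\lambda\times_{\ic_\alpha}\ib_\beta$, so the fiber-product topology is the projection topology of an inverse limit of discrete rings and is therefore complete. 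The two routes buy different things. The paper's argument is shorter and purely categorical, but its final step tacitly requires knowing that $\widehat{R\times_TS}$, equipped with $q_1,q_2$, itself satisfies the universal property of the pullback of $R\to T\leftarrow S$ --- something the proof does not verify --- so your explicit limit-interchange computation is arguably the more airtight of the two. The price you pay is the bookkeeping you correctly flag: one must check that only the compatible triples $(\lambda,\beta,\alpha)$ with $\varepsilon_R(\ia_\lambda)\subseteq\ic_\alpha$ and $\varepsilon_S(\ib_\beta)\subseteq\ic_\alpha$ index the inverse system, and that this subset is cofinal in each of $\Lambda$, $\Sigma$, $C$ (which follows from continuity of $\varepsilon_R,\varepsilon_S$), so that the iterated limit over it still recovers $R$, $S$ and $T$. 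Your observation that non-surjectivity of the projections is harmless is also correct and worth keeping, since it is exactly the point where a careless reader might object.
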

\begin{proof}
Since $\widehat{R\times_TS}$ is admissible (Remark \ref{remfiberadmissible}), and the rings $R$, $S$ and $T$ are admissible, the universal property of pullbacks gives that the following  diagram  is commutative
$$
\begin{tikzcd}
{} & \widehat{R\times_TS}
\arrow[bend right,swap]{ddl}{q_1}
\arrow[bend left]{ddr}{q_2}
\arrow[dashed]{d}[description]{\varphi} & & \\
& R\times_TS \arrow{dr}{\eta_S} \arrow{dl}[swap]{\eta_R} \\
R \arrow[swap]{dr}{\varepsilon_S} & & 
S \arrow{dl}{\varepsilon_R} \\
& T
\end{tikzcd}
 $$
Therefore, since  $R$, $S$ and $T$ are admissible and pullbacks are unique up to isomorphism, we derive that  $\widehat{R\times_TS}\cong R\times_TS \cong \widehat{R}\times_{\widehat{T}}\widehat{S}$, and this provides the desired conclusion.
\end{proof}

\begin{ex}\label{exglu}{\rm 
\begin{enumerate}
\item 
Note that \( k[[x_1, \dots, x_n]] / I \), where \( I \) is an ideal, can be an admissible ring, but this depends on the inherited topology and the ideal \( I \). In fact,
\( k[[x_1, \dots, x_n]] \) is the ring of formal power series with the topology defined by the powers of the maximal ideal \( \mathfrak{m} = (x_1, \dots, x_n) \). This topology is linear, and \( k[[x_1, \dots, x_n]] \) is complete.
The quotient \( k[[x_1, \dots, x_n]] / I \) inherits the topology from the original ring via the quotient topology. A basis of neighborhoods of \( 0 \) in the quotient is given by the ideals \( \{\overline{\mathfrak{m}}:=\mathfrak{m}^n+I \mid n \geq 0\} \). The ring \( k[[x_1, \dots, x_n]] / I \) is linearly topologized since its basis of neighborhoods of \( 0 \) consists of ideals. Furthermore, the quotient is complete because \( k[[x_1, \dots, x_n]] \) is complete, and \( I \) is closed in the original topology. Therefore, \( k[[x_1, \dots, x_n]] / I \), with the inherited topology, is admissible. 
\item  Let $ k[[x_1, \dots, x_n]] / I$ and \( k[[y_1, \dots, y_m]] / J \)  be admissible rings (by (1)). By Example \ref{Union} (ii), one has that the fiber product $\frac{k[[x_1,\dots,x_n]]}{I}\times_k \frac{k[[y_1,\dots,y_m]]}{J}$ is also admissible.

\item Let $k[[x_1, \dots, x_n]] / I$, \( k[[y_1, \dots, y_m]] / J \) and  $\frac{k[[z_1,\dots,z_{s_{i,j}}]]}{K_{i,j}}$ be admissible rings. By Theorem  \ref{fibercomplete}, the ring ${\frac{k[[x_1,\dots,x_{n_i}]]}{I_i}\times_{\frac{k[[z_1,\dots,z_{s_{i,j}}]]}{K_{i,j}}} \frac{k[[y_1,\dots,y_{m_j}]]}{J_j}}$ is admissible.  So,  the 
Cohen-Structure Theorem  yields
\begin{align*}
\begin{array}{lll}
{\frac{k[[x_1,\dots,x_{n_i}]]}{I_i}\times_{\frac{k[[z_1,\dots,z_{s_{i,j}}]]}{K_{i,j}}} \frac{k[[y_1,\dots,y_{m_j}]]}{J_j}} &\cong &\reallywidehat{\frac{k[[x_1,\dots,x_{n_i}]]}{I_i}\times_{\frac{k[[z_1,\dots,z_{s_{i,j}}]]}{K_{i,j}}} \frac{k[[y_1,\dots,y_{m_j}]]}{J_j}}\\
&\cong & k[[w_1,\ldots,w_r]]/K.
\end{array}
\end{align*}
\end{enumerate}}
\end{ex}

\section{On the gluing of formal schemes}\label{formal}

Throughout this section, all rings will be admissible, and $\{\ia_{\lambda}\}_{\lambda \in \Lambda}$ will denote a fundamental system of neighborhoods of 0 consisting of all the ideals of definition.

\begin{rem} {\rm
Let $R$ be any ring.
    \begin{enumerate}
       
         \item 
         For each $\lambda$, we have sheaf ideals $\mathcal{I}_\lambda$ on $X=\Spec(R)$ associated with the ideal of definition $\ia_{\lambda}$. We denote by $\OO_\lambda$ the sheaf induced in $\mathcal{X}$ by $\OO_{X}/\mathcal{I}_\lambda$. For each $\ia_\mu \subset\ia_\lambda$, the canonical moprphism $R/\ia_\mu \to R/\ia_\lambda$ induces a morphism of sheaves of rings $u_{\mu\lambda}: \OO_\mu \to \OO_\lambda$. Then $\{\OO_\lambda\}_{\lambda \in \Lambda}$ forms an inverse system of sheaves in the topological space $\Spec(R/\ia)$. One has $\varprojlim\limits_{\lambda}\OO_{\lambda}$ as a sheaf of rings on $\Spec(R/\ia)$. 
    \item In particular, if $R$ is $\ia$-adic, the collection $\{\ia^{n+1}\}_{n\geq 0}$ defined by an ideal of definition $\ia$ is cofinal, and thus the above sheaf coincides with the projective $\varprojlim\limits_{n}\OO_{\Spec(R/\ia^{n+1})}$.
    \end{enumerate}
}
\end{rem}

With the previous comments, we are able to define the formal schemes.

 \begin{defn}{\rm 
Let $R$ be an admissible topological ring, the {\it formal spectrum} of \( R \), denoted by \( \mathrm{Spf}(R) \), is defined as the closed subspace \( \mathcal{X} \) of \( \mathrm{Spec}(R) \) consisting of the open prime ideals of \( R \). A topologically ringed space is called a \emph{formal affine scheme} if it is isomorphic to a formal spectrum \( \mathrm{Spf}(R) = \mathcal{X} \), equipped with the sheaf of rings \( \OO_{\mathcal{X}} := \varprojlim\limits_{\lambda} \OO_{\lambda} \), as defined above. 

A \textit{formal scheme} is a topologically ringed space \( (\mathcal{X}, \OO_{\mathcal{X}}) \) that is locally isomorphic to a formal affine scheme. For simplicity, the pair \( (\mathcal{X}, \OO_{\mathcal{X}}) \) will be denoted simply by \( \mathcal{X} \).
}
\end{defn}

An \textit {open formal subscheme} of a formal scheme $\mathcal{X}$ is a formal scheme of the form  $(U,\OO_{\mathcal{X}}|_U)$, where $U$ is an open subset of the underlying topological space of $\mathcal{X}$. 
An open formal subscheme $U\subseteq \mathcal{X}$ is said to be \textit{affine open} if it is an affine formal scheme. Thus any formal scheme  $\mathcal{X}$ allows an open covering $\mathcal{X}=\bigcup_\alpha  U_{\alpha}$ consisting of affine open formal subschemes; an open covering of this form is called an \textit{affine (open) covering}.

Just as in the case of ordinary schemes, any open of a formal scheme is a formal scheme (see \cite[(\textbf{I}, 10.4.4)]{EGAI}).

\begin{ex}{\rm Let $X$ be a scheme and $Y \subset X$ a closed subscheme, defined by a quasicoherent ideal $\mathcal{I} \subset \mathcal{O}_X$. Then consider the sheaf $\mathcal{O}_Y$ obtained by restricting the projective $\varprojlim\limits_n\mathcal{O}_X/ \mathcal{I}^n$ to $Y$. It follows that $\left(Y, \mathcal{O}_Y\right)$ is a locally topologically ringed space, the desired \textit{formal completion of $X$ along $Y$}. Locally, the construction looks as follows: Let $X=\operatorname{Spec}(R)$ and assume that $\mathcal{I}$ is associated to the ideal $\mathfrak{a} \subseteq R$. Then
$$
\left(Y, \mathcal{O}_Y\right)=\operatorname{Spf}\left(\varprojlim\limits_nR / \mathfrak{a}^n\right)=\operatorname{Spf}(\hat{R})
$$
where $\hat{R}$ is the $\ia$-adic completion of $R$.
}

\end{ex}

Next, we define the complete localization that will be essential for the results presented in this section.

 Let $R$ be an admissible ring, $\{\ia_\lambda\}_{\lambda \in \Lambda}$ a fundamental system of ideals of definition, and $S \subseteq R$ a multiplicative subset. Consider the localization $S^{-1}R$ in the multiplicative set $S$ endowed with the topology defined by $\{S^{-1}\ia_\lambda\}_{\lambda\in \Lambda}$. Let $R_{\{S\}}$ denote the completion of the ring $S^{-1}R$:
 $$
 R_{\{S\}}=\varprojlim\limits_{\lambda \in \Lambda} S^{-1}R/S^{-1}\ia_\lambda.
 $$
 We call $R_{\{S\}}$ as the \textit{complete localization} of $R$ with respect to $S$. If \( f \in R \) and \( S = \{ 1, f, f^2, \dots \} \), then we denote \( R_{\{ S \}} \) by \( R_{\{ f \}} \).


\begin{rem}\label{stalk}{\rm 
    Let \( \mathfrak{p} \) be an open prime ideal of \( R \), and let \( S \) denote the multiplicatively closed set of all \( f \in R \) with \( f \notin \mathfrak{p} \) (that is, \( \mathfrak{p} \in \mathcal{D}(f):=D(f)\cap \mathcal{X} \)). There is a canonical isomorphism of rings:
    \[
    \mathcal{O}_{\mathcal{X}, \mathfrak{p}} = \varinjlim\limits_{\mathfrak{p} \in U} \Gamma(U, \mathcal{O}_{\mathcal{X}}) \cong \varinjlim\limits_{f \in S} \Gamma(\mathcal{D}(f), \mathcal{O}_{\mathcal{X}}) \cong \varinjlim\limits_{f \in S} R_{\{ f \}} = R_{\{ S \}},
    \]
    where \( U \) is an open subset of \( \mathcal{X} \). From these isomorphisms, we conclude that \( \mathcal{O}_{\mathcal{X}, \mathfrak{p}} \) is a local ring with residue field canonically isomorphic to \( k(\mathfrak{p}) := R_{\mathfrak{p}} / \mathfrak{p} R_{\mathfrak{p}} \). Therefore, we have in particular that the ringed space \( \left( \mathcal{X}, \mathcal{O}_{\mathcal{X}} \right) \) is a locally ringed space.}
\end{rem}



 
 
\begin{defn}{\rm Given two formal schemes $\mathcal{X}$ and $\mathcal{Y}$, a \textit{morphism (of formal schemes)} from $\mathcal{X}$ to $\mathcal{Y}$ is a morphism $(\Phi, \Tilde{\Phi})$ of topologically ringed spaces such that, for all $x \in \mathcal{X}$, $\Tilde{\Phi}^{\#}_x$ is a local homomorphism $\OO_{\mathcal{Y},\Phi(x)}\longrightarrow \OO_{\mathcal{X},x}$.
}
\end{defn}


Let \( \mathcal{X} \) and \( \mathcal{Y} \) be two formal schemes. A morphism of formal schemes \( \Phi: \mathcal{X} \to \mathcal{Y} \) is an isomorphism if for each point \( x \in \mathcal{X} \), the induced map on the stalks of the structure sheaves is an isomorphism, i.e.,  if it induces a bijection of stalks of the structure sheaves at each point, and the inverse morphism \( \Psi: \mathcal{Y} \to \mathcal{X} \) also induces a bijection of stalks.

\begin{rem}\label{assumption} {\rm 
In the context of the above definition, in what follows in this paper, the formal schemes \( \mathcal{X} \) and \( \mathcal{Z} \), as well as \( \mathcal{Y} \) and \( \mathcal{Z} \), will not be isomorphic in the stalks we are considering. Otherwise, we have a trivial fiber product of 
$\mathcal{O}_{\mathcal{X},x}\times_{\mathcal{O}_{\mathcal{Z},z}}\mathcal{O}_{\mathcal{Y},y}$ which is not the case (see definition of fiber product rings for details).
}
\end{rem}

 \begin{rem}\label{affine} {\rm
As in the case of ordinary schemes, the functor
 $$
 R\longmapsto \Spf({R})
 $$
 gives rise to a categorical equivalence between the opposite category of the category of admissible rings and the  category of affine formal schemes. For additional properties regarding formal schemes, see \cite[(\textbf{I}, 10.1.3), (\textbf{I}, 10.1.4), (\textbf{I}, 10.1.6)]{EGAI} and \cite{notes}.
}
\end{rem}

\subsection*{Gluing of formal schemes}
With this new notion, it is natural to investigate the behavior of the structure of the gluing of formal schemes. For the next definition, \( \mathcal{X} \coprod \mathcal{Y} \) denotes the co-product or disjoint union of sets \( \mathcal{X} \) and \( \mathcal{Y} \).

\begin{defn}\label{copro}{\rm
Let \( \alpha: \mathcal{Z} \to \mathcal{X} \) and \( \beta: \mathcal{Z} \to \mathcal{Y} \) be morphisms of ringed spaces. Define
\[
\mathcal{X} \sqcup_{\mathcal{Z}} \mathcal{Y} = \mathcal{X} \coprod \mathcal{Y} / \sim,
\]
\noindent where the relation \( \sim \) is generated by relations of the form \( x \sim y \) (\( x \in \mathcal{X} \), \( y \in \mathcal{Y} \)) if there exists \( z \in \mathcal{Z} \) such that \( \alpha(z) = x \) and \( \beta(z) = y \).

Namely, it is the smallest equivalence relation on \( \mathcal{X} \coprod \mathcal{Y} \) such that after passing to the quotient \( \mathcal{X} \coprod \mathcal{Y} / \sim \) the following square becomes commutative:
\[
\xymatrix{\mathcal{Z} \ar[r]^{\alpha}  \ar[d]_{\beta} & \mathcal{X}  \ar[d]^{f} \\
            \mathcal{Y}  \ar[r]^{g}        & \mathcal{X} \sqcup_{\mathcal{Z}} \mathcal{Y}, }
\]
\noindent where \( f \) and \( g \) are the continuous natural maps.}
\end{defn} 

Since \( (\mathcal{X}, \mathcal{O}_{\mathcal{X}}) \), \( (\mathcal{Y}, \mathcal{O}_{\mathcal{Y}}) \), and \( (\mathcal{Z}, \mathcal{O}_{\mathcal{Z}}) \) are ringed spaces, \cite[Proposition 2.2]{KS} provides that \( (\mathcal{X} \sqcup_{\mathcal{Z}} \mathcal{Y}, \mathcal{O}_{\mathcal{X} \sqcup_{\mathcal{Z}} \mathcal{Y}}) \) is also a ringed space. Therefore, \( f \) and \( g \) become morphisms of ringed spaces. Note that this definition also satisfies the universal property (\cite[Theorem 2.3]{KS}).

 \begin{prop}\label{glueformalafi}
    Let $\mathcal{X}$, $\mathcal{Y}$ and $\mathcal{Z}$ be affine formal schemes, such that $\Tilde{\alpha} : \OO_{\mathcal{X}} \longrightarrow \alpha_{\ast}\OO_{\mathcal{Z}}$ and $\Tilde{\beta}: \OO_{\mathcal{Y}} \longrightarrow \beta_{\ast}\OO_{\mathcal{Z}}$ are both surjective homomorphisms. Then,
$\mathcal{X}\sqcup_{\mathcal{Z}}\mathcal{Y}$ is an affine formal scheme.
\end{prop}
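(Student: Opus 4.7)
The strategy is to reduce the proposition to the ring-theoretic statement of Theorem \ref{fibercomplete} via the antiequivalence between affine formal schemes and admissible rings (Remark \ref{affine}). First, write $\mathcal{X} = \Spf(R)$, $\mathcal{Y} = \Spf(S)$, and $\mathcal{Z} = \Spf(T)$ for admissible rings $R,S,T$. The morphisms $\alpha$ and $\beta$ correspond under this antiequivalence to continuous ring homomorphisms $\pi_R\colon R\to T$ and $\pi_S\colon S\to T$, and passing to global sections converts the hypothesized surjectivity of $\Tilde{\alpha}$ and $\Tilde{\beta}$ into surjectivity of $\pi_R$ and $\pi_S$.

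Next, I would form $P := R\times_T S$. By Theorem \ref{fibercomplete}, $P$ is admissible, so $\Spf(P)$ is an affine formal scheme. The candidate identification is $\mathcal{X}\sqcup_{\mathcal{Z}}\mathcal{Y}\cong\Spf(P)$. To produce a morphism in one direction, note that the natural projections $\eta_R\colon P\to R$ and $\eta_S\colon P\to S$ induce morphisms $\Spf(\eta_R)\colon \mathcal{X}\to\Spf(P)$ and $\Spf(\eta_S)\colon \mathcal{Y}\to\Spf(P)$ that agree after precomposition with $\alpha$ and $\beta$, since the pullback diagram \eqref{dia} commutes. The universal property of the gluing (\cite[Theorem 2.3]{KS}) then supplies a canonical morphism $\Phi\colon\mathcal{X}\sqcup_{\mathcal{Z}}\mathcal{Y}\to\Spf(P)$ in the category of ringed spaces.

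For the inverse, I would verify that $\Spf(P)$ itself satisfies the universal property of the pushout against arbitrary formal schemes $\mathcal{W}$: a compatible pair of morphisms $u\colon\mathcal{X}\to\mathcal{W}$, $v\colon\mathcal{Y}\to\mathcal{W}$ with $u\alpha = v\beta$ can be reconstructed locally on an affine open cover of $\mathcal{W}$, and on each affine piece the antiequivalence of Remark \ref{affine} translates the problem into the universal property of $P$ as fiber product of admissible rings. Uniqueness follows from the faithfulness of $\Spf$. Combining this with the canonical $\Phi$ produced above yields $\mathcal{X}\sqcup_{\mathcal{Z}}\mathcal{Y}\cong\Spf(P)$, establishing the proposition.

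The main obstacle will be reconciling the two descriptions of the structure sheaf: the pushout-of-ringed-spaces sheaf on $\mathcal{X}\sqcup_{\mathcal{Z}}\mathcal{Y}$, whose sections on an open $U$ are the fiber product $\mathcal{O}_{\mathcal{X}}(f^{-1}U)\times_{\mathcal{O}_{\mathcal{Z}}(\alpha^{-1}f^{-1}U)}\mathcal{O}_{\mathcal{Y}}(g^{-1}U)$, versus the sheaf $\mathcal{O}_{\Spf(P)}$ built from complete localizations of $P$. The surjectivity of $\Tilde{\alpha}$ and $\Tilde{\beta}$ is crucial here: it forces $\alpha$ and $\beta$ to be closed immersions, so the topological quotient is in bijection with the set of open primes of $P$, and the stalks computed via Remark \ref{stalk} match the fiber products $\mathcal{O}_{\mathcal{X},x}\times_{\mathcal{O}_{\mathcal{Z},z}}\mathcal{O}_{\mathcal{Y},y}$ singled out in Remark \ref{assumption}. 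Once this local matching is in hand, the global isomorphism $\Phi$ follows by sheafification.
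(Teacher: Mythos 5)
Your proposal follows essentially the same route as the paper: the paper's entire proof consists of the observation that admissible rings have fiber products (Theorem \ref{fibercomplete}) and that affine formal schemes form the dual category of admissible rings (Remark \ref{affine}), whence $\mathcal{X}\sqcup_{\mathcal{Z}}\mathcal{Y}=\Spf(R\times_T S)$. The extra work you outline --- verifying that the ringed-space pushout of Definition \ref{copro} agrees with the categorical pushout of affine formal schemes, using the surjectivity of $\Tilde{\alpha}$ and $\Tilde{\beta}$ to match the underlying topological quotient and the stalks with those of $\Spf(R\times_T S)$ --- is exactly the step the paper leaves implicit, so your version is the same argument carried out more carefully.
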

\begin{proof}
 By Theorem \ref{fibercomplete}, the category of admissible rings admits pullback. In addition, since the category of affine formal schemes is the dual category of admissible rings (Remark \ref{affine}), one has 
 \begin{alignat*}{2}
     \mathcal{X}\sqcup_{\mathcal{Z}}\mathcal{Y}=\Spf( R\times_TS).\tag*{\qedhere}
 \end{alignat*}
\end{proof}

We are able to show the main result of this section.

\begin{thm}\label{gluingFS}
 Let $\mathcal{X}$, $\mathcal{Y}$ and $\mathcal{Z}$ be formal schemes such that $\alpha : \mathcal{Z} \longrightarrow \mathcal{X}$ and $\beta : \mathcal{Z} \longrightarrow \mathcal{Y}$ are homeomorphism onto its image. Then, $\mathcal{X}\sqcup_{\mathcal{Z}}\mathcal{Y}$ is a formal scheme.
  
 
\end{thm}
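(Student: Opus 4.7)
The plan is to argue locally: since being a formal scheme is a local property, it is enough to show that every point of $\mathcal{X} \sqcup_{\mathcal{Z}} \mathcal{Y}$ admits an open neighborhood isomorphic to an affine formal scheme. The affine case has already been handled by Proposition \ref{glueformalafi}, which itself rests on Theorem \ref{fibercomplete} (the fiber product of admissible rings is admissible), so the whole task reduces to producing, around each point, a suitable triple of compatible affine formal opens to which Proposition \ref{glueformalafi} applies.

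Let $f \colon \mathcal{X} \to \mathcal{X} \sqcup_{\mathcal{Z}} \mathcal{Y}$ and $g \colon \mathcal{Y} \to \mathcal{X} \sqcup_{\mathcal{Z}} \mathcal{Y}$ be the canonical maps given in Definition \ref{copro}. By construction the underlying topological space carries the quotient topology from $\mathcal{X} \coprod \mathcal{Y}$, and the hypothesis that $\alpha$ and $\beta$ are homeomorphisms onto their images makes the restrictions $f|_{\mathcal{X}\setminus \alpha(\mathcal{Z})}$ and $g|_{\mathcal{Y}\setminus \beta(\mathcal{Z})}$ open embeddings. Fix $w \in \mathcal{X}\sqcup_{\mathcal{Z}} \mathcal{Y}$ and distinguish two cases. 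If $w = f(x)$ with $x \in \mathcal{X}\setminus \alpha(\mathcal{Z})$ (respectively $w = g(y)$ with $y \in \mathcal{Y}\setminus \beta(\mathcal{Z})$), then, shrinking if necessary, one picks an affine formal open $U \subset \mathcal{X}$ of $x$ disjoint from $\alpha(\mathcal{Z})$; the image $f(U)$ is open in the gluing and $f|_U$ is an isomorphism of formal schemes onto $f(U)$, so $w$ has an affine formal open neighborhood.

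The substantive case is when $w$ comes from a point $z \in \mathcal{Z}$, i.e.\ $w = f(\alpha(z)) = g(\beta(z))$. Choose affine formal opens $U_0 \cong \Spf(R)$ in $\mathcal{X}$ around $\alpha(z)$, $V_0 \cong \Spf(S)$ in $\mathcal{Y}$ around $\beta(z)$, and $W_0 \cong \Spf(T)$ in $\mathcal{Z}$ around $z$. The goal is to refine them to affine formal opens $U, V, W$ with
\[
\alpha^{-1}(U) = W = \beta^{-1}(V).
\]
For this one uses that the principal opens $\mathcal{D}(h)$ form a basis of affine formal opens (with $\Gamma(\mathcal{D}(h),\mathcal{O})$ the complete localization $R_{\{h\}}$ of Remark \ref{stalk}), and the homeomorphism property of $\alpha,\beta$ to transport a common principal open of $W_0$ to principal opens of $U_0, V_0$. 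Once this compatibility is in place, Proposition \ref{glueformalafi} yields
\[
U \sqcup_{W} V \;\cong\; \Spf(R\times_T S),
\]
an affine formal scheme, whose image in $\mathcal{X}\sqcup_{\mathcal{Z}} \mathcal{Y}$ is an open neighborhood of $w$ by the universal property of the gluing (see \cite[Theorem 2.3]{KS}).

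The main obstacle is precisely the compatibility step $\alpha^{-1}(U) = W = \beta^{-1}(V)$: one must simultaneously shrink three affine neighborhoods across two morphisms while remaining in the basis of principal affine formal opens. A related technical point is that Proposition \ref{glueformalafi} requires the induced sheaf maps $\widetilde{\alpha}$ and $\widetilde{\beta}$ to be surjective on the restricted opens; this is ensured by reading the homeomorphism-onto-image hypothesis in the sense of Remark \ref{assumption}, so that, at the level of the affine models, the maps $R \to T$ and $S \to T$ are surjective homomorphisms of admissible rings, and hence $R\times_T S$ is an admissible ring by Theorem \ref{fibercomplete}.
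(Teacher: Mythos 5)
Your proposal takes essentially the same route as the paper's proof: both reduce the statement to covering $\mathcal{X}\sqcup_{\mathcal{Z}}\mathcal{Y}$ by compatible affine triples $U\sqcup_{W}V\cong \Spf(R\times_T S)$ and then invoking Proposition \ref{glueformalafi} (hence Theorem \ref{fibercomplete}). The only differences are presentational: the paper additionally writes down a stalk-wise presheaf and sheafifies to exhibit $\mathcal{O}_{\mathcal{X}\sqcup_{\mathcal{Z}}\mathcal{Y}}$ explicitly, and it obtains the compatibility $W_{i,j}=\alpha^{-1}(U_i)=\beta^{-1}(V_j)$ by citing \cite[Lemma 2.4]{KS} rather than by your principal-open shrinking argument, while both treatments leave the surjectivity hypothesis of Proposition \ref{glueformalafi} implicit in the same way.
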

\begin{proof}
In order to give the structure sheaf on $\mathcal{W}:=\mathcal{X}\sqcup_{\mathcal{Z}}\mathcal{Y}$, one canonical way is the following. First we define a  presheaf $\mathcal{F}$ on $\mathcal{W}$ in the following way:
\begin{itemize}
    \item[(i)] For $p\in \mathcal{W} \setminus \mathcal{Y}$, put $\mathcal{F}_p:=\OO_{\mathcal{X},p}.$

    \item[(ii)] For $p\in \mathcal{W} \setminus \mathcal{X}$, put $\mathcal{F}_p:=\OO_{\mathcal{Y},p}.$
    
    \item[(iii)] For $p=\alpha(z)=\beta(z)$, put $\mathcal{F}_p:=\OO_{\mathcal{X},\alpha(z)}\times_{\OO_{\mathcal{Z},z}}\OO_{\mathcal{Y},\beta(z)}.$
\end{itemize}

Now, for an open subset $U \subset \mathcal{W}$ (an open subset in $\mathcal{W}$ is defined as the subset whose pull-back to
$\mathcal{X}\sqcup_{\mathcal{Z}}\mathcal{Y}$ is an open subset), we define
$$\mathcal{F}(U):=\prod_{p\in U}\mathcal{F}_p.$$

For open subsets $V\subset U$ of $\mathcal{W}$, define the restriction map $\rho_V^U:\mathcal{F}(U)\to \mathcal{F}(V)$ in the canonical way. Then $\mathcal{F}$ be come a presheaf on $W$. Let $\OO_\mathcal{W}$ be the sheafication  $\mathcal{F}$. 

We cover $\mathcal{X}$, $\mathcal{Y}$ and $\mathcal{Z}$ by affine formal schemes, respectively. In fact, by \cite[Lemma 2.4]{KS}, 
we get the open  subsets  $U_i\sqcup_{W_{i,j}}V_j$ of $\mathcal{X}\sqcup_{\mathcal{Z}}\mathcal{Y}$ correspond bijectively to open  subsets $U_i$ and $V_j$ of the $\mathcal{X}$ and $\mathcal{Y}$, respectively, where $W_{i,j}:=\alpha^{-1}(U_i)=\beta^{-1}(V_j).$ Note that $W_{i,j}$ is a formal affine open, and their union covers the scheme $\mathcal{Z}$ because the maps  $\alpha: \mathcal{Z} \to \mathcal{X}$ and $\beta: \mathcal{Z} \to \mathcal{Y}$ are homeomorphism onto its image. 
 Thus, since $\mathcal{X}=\cup_{i}U_i$, $\mathcal{Y}=\cup_{j}V_j$ and $\mathcal{Z}=\cup_{i,j}W_{i,j}$ are such that $U_i=\Spf(R_i)$,  $V_j=\Spf(S_j)$ and $W_{i,j}=\Spf(T_{i,j})$, where $R_i$, $S_j$, $T_{i,j}$ are admissible rings. So, we have a cover  $\mathcal{X}\sqcup_\mathcal{Z}\mathcal{Y}=\cup_{i,j}(U_i\sqcup_{W_{i,j}}V_j)$, i.e., it is a union of open subsets such that
$$\begin{array}{lll}
U_i\sqcup_{W_{i,j}}V_j&\cong& \Spf(R_i)\sqcup_{\Spf(T_{i,j})}\Spf(S_j)\\
&\cong & \Spf(R_i\times_{T_{i,j}}S_j), \,\,\,\, \mbox{ by Theorem \ref{fibercomplete}. }
\end{array}
$$
Therefore, the desired result follows by Proposition \ref{glueformalafi}.
\end{proof}

\section{Structural results of the gluing of formal schemes}

 A formal scheme $\mathcal{X}$ is \textit{adic} if there exists a cover of $\mathcal{X}$ by affine open formal schemes $U_i = \Spf(R_i)$, where $R_i$ is adic.
A formal scheme $\mathcal{X}$ is \textit{locally Noetherian} if  $R_i$ are Noetherian and adic. Also, $\mathcal{X}$ is \textit{Noetherian} if it is locally Noetherian and quasi-compact.

\begin{prop}\cite[(\textbf{I}, 10.6.5)]{EGAI}\label{noetherianafffine} Let $R$ be an admissible ring. Then $\Spf(R)$ is
Noetherian if and only if $R$ is Noetherian and adic.
\end{prop}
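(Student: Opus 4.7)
The plan is to establish both implications separately, treating the easier direction first. For the backward direction, assume $R$ is Noetherian and adic with ideal of definition $\mathfrak{a}$. Then $\Spf(R)$ is locally Noetherian by definition, since it admits the trivial affine open cover by itself whose coordinate ring is, by hypothesis, Noetherian and adic. For quasi-compactness, I would use that the underlying topological space of $\Spf(R)$ is the closed subspace $V(\mathfrak{a}) \subseteq \Spec(R)$ consisting of open prime ideals (i.e.\ those containing $\mathfrak{a}$), and a closed subspace of the quasi-compact space $\Spec(R)$ is itself quasi-compact. Combining these gives that $\Spf(R)$ is Noetherian.

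For the forward direction, assume $\Spf(R)$ is Noetherian. By definition it is locally Noetherian and quasi-compact, so I can extract a \emph{finite} affine open cover $\Spf(R) = \bigcup_{i=1}^{n} \Spf(R_i)$, where each $R_i$ is Noetherian and adic. Each such $\Spf(R_i)$ is obtained from a complete localization $R_i \cong R_{\{f_i\}}$ for some $f_i \in R$, as in the discussion preceding Remark \ref{stalk}. I would then exploit two ingredients in tandem: (a) the sheaf axiom, which realizes $R = \Gamma(\Spf(R), \OO_{\Spf(R)})$ as an equalizer of the $R_i$'s and their pairwise overlaps, and (b) the categorical equivalence between admissible rings and affine formal schemes from Remark \ref{affine}. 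For the Noetherianity of $R$, I would mimic the standard descent argument for ordinary affine schemes, using that the pieces $R_i$ are Noetherian to control finitely generated ideals of $R$ via their images in the cover. For the adicness, I would take the ideals of definition $\mathfrak{a}_i \subseteq R_i$, patch them through the restriction maps and overlaps to produce a candidate ideal $\mathfrak{a} \subseteq R$, and verify that $\{\mathfrak{a}^n\}_{n \geq 0}$ forms a fundamental system of neighborhoods of $0$.

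The main obstacle, I expect, lies entirely in the forward direction, and specifically in upgrading admissibility of $R$ to adicness. Admissibility only guarantees the existence of an ideal of definition $\mathfrak{a}$ that is open and topologically nilpotent, a strictly weaker condition than requiring $\{\mathfrak{a}^n\}$ itself to be a fundamental system of open neighborhoods (as illustrated by the admissible-but-not-adic example $k[[x,y]]$ with the $(xy^n)$-topology given earlier). The patching argument must therefore do more than merely produce an ideal of definition; it must ensure that the resulting global topology on $R$ really is the $\mathfrak{a}$-adic one. This is where the Noetherian hypothesis on each $R_i$ becomes essential: on Noetherian adic pieces, arbitrary open ideals are controlled by powers of $\mathfrak{a}_i$, and this comparison must be shown to be compatible across the overlaps so that the same control propagates to $R$.
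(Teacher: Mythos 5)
First, a point of comparison: the paper does not prove this statement at all --- it is quoted verbatim from \cite[(\textbf{I}, 10.6.5)]{EGAI} --- so there is no in-paper argument to measure yours against; I can only judge your proposal on its own terms. Your backward direction is correct and complete: local Noetherianity is immediate from the paper's definition, and quasi-compactness follows because the underlying space of $\Spf(R)$ is the closed subspace $V(\ia)$ of the quasi-compact space $\Spec(R)$.

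The forward direction, however, is a plan rather than a proof, and the gaps are exactly where the content lies. (a) A finite affine open cover of $\Spf(R)$ need not consist of basic opens $\mathcal{D}(f_i)$, so the identification $R_i \cong R_{\{f_i\}}$ requires first refining the cover; this is fixable but unaddressed. (b) "Mimicking the standard descent argument" for Noetherianity does not transfer directly: $R_{\{f\}}$ is the \emph{completion} of $R_f$, not $R_f$ itself, and recovering an ideal of $R$ from its images in the $R_{\{f_i\}}$ needs a flatness or closedness input that is itself part of what is being established. (c) Most seriously, the adicness step --- which you correctly identify as the crux --- is left entirely unresolved. Patching the local ideals of definition $\ia_i$ produces, at best, an ideal of definition of $R$, i.e.\ admissibility, which the paper's own example ($k[[x,y]]$ with the $(xy^n)$-topology) shows is strictly weaker than adicness; nothing in your sketch explains why the powers of the glued ideal are cofinal among the open ideals of $R$. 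For what it is worth, the EGA argument avoids cover-patching altogether: it uses the existence of a largest ideal of definition of a Noetherian formal scheme to write $R \cong \varprojlim_n R/\ia^{n+1}$ as a limit of discrete Noetherian rings with nilpotent transition kernels, from which adicness and Noetherianity of $R$ follow together. As it stands, your proposal identifies the right obstacle but does not overcome it.
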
 

With the previous information, we show the following result.

\begin{thm}\label{gluingFS2}
 Let $\mathcal{X}$, $\mathcal{Y}$ and $\mathcal{Z}$ be Noetherian formal schemes such that  $\alpha: \mathcal{Z}\to \mathcal{X}$ and $\beta:\mathcal{Z}\to \mathcal{Y}$ are closed immersion.  Then, $\mathcal{X}\sqcup_{\mathcal{Z}}\mathcal{Y}$ is Noetherian formal scheme.
\end{thm}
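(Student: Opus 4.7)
My plan is to invoke Theorem \ref{gluingFS}, which already provides a formal scheme structure together with the explicit affine open cover
\[
\mathcal{X} \sqcup_{\mathcal{Z}} \mathcal{Y} \;=\; \bigcup_{i,j} \Spf\bigl(R_i \times_{T_{i,j}} S_j\bigr),
\]
where $U_i = \Spf(R_i)$, $V_j = \Spf(S_j)$, $W_{i,j} = \Spf(T_{i,j})$ cover $\mathcal{X}$, $\mathcal{Y}$, $\mathcal{Z}$. Since $\mathcal{X}$ and $\mathcal{Y}$ are Noetherian, hence quasi-compact, these covers may be chosen finite; by Proposition \ref{noetherianafffine} each of $R_i$, $S_j$, $T_{i,j}$ is Noetherian and adic, say with ideals of definition $\mathfrak{a}_i$, $\mathfrak{b}_j$, $\mathfrak{c}_{i,j}$. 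Because $\alpha$ and $\beta$ are closed immersions, the induced ring maps $\pi_R : R_i \twoheadrightarrow T_{i,j}$ and $\pi_S : S_j \twoheadrightarrow T_{i,j}$ are surjective and continuous with $\pi_R(\mathfrak{a}_i) = \mathfrak{c}_{i,j} = \pi_S(\mathfrak{b}_j)$. It therefore suffices to prove that under these hypotheses the ring $A := R_i \times_{T_{i,j}} S_j$ is Noetherian and adic; a second application of Proposition \ref{noetherianafffine} then delivers local Noetherianness, and finiteness of the cover yields quasi-compactness.

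For Noetherianness of $A$, I would use the short exact sequence of $A$-modules
\[
0 \longrightarrow \ker(\pi_R) \times 0 \longrightarrow A \xrightarrow{\eta_S} S_j \longrightarrow 0,
\]
in which $A$ acts on $\ker(\pi_R) \times 0$ through the surjective projection $\eta_R : A \twoheadrightarrow R_i$. Since $R_i$ is Noetherian, $\ker(\pi_R)$ is finitely generated as an $R_i$-module and hence as an $A$-module; the quotient $S_j$ is Noetherian over itself, so $A$ is squeezed between two Noetherian $A$-modules and is itself Noetherian.

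For the adic property, set $B := R_i \times S_j$ and $\mathfrak{J} := \mathfrak{a}_i \times_{\mathfrak{c}_{i,j}} \mathfrak{b}_j$. Lemma \ref{fiberpre-admi} already certifies that $\mathfrak{J}$ is an ideal of definition of $A$; the remaining content is to see that $\{\mathfrak{J}^n\}$ is cofinal with $\{\mathfrak{a}_i^n \times_{\mathfrak{c}_{i,j}^n} \mathfrak{b}_j^n\}$. The exact sequence $0 \to A \to B \to T_{i,j} \to 0$, given by $(r,s)\mapsto \pi_R(r)-\pi_S(s)$, realizes $B$ as a finitely generated $A$-module, so the Artin--Rees lemma applies to the submodule $A \subseteq B$. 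The key computation is $\mathfrak{J}\cdot B = \mathfrak{a}_i \times \mathfrak{b}_j$: given $a \in \mathfrak{a}_i$, use surjectivity of $\pi_S : \mathfrak{b}_j \twoheadrightarrow \mathfrak{c}_{i,j}$ to lift $\pi_R(a)$ to some $s \in \mathfrak{b}_j$, whence $(a,s) \in \mathfrak{J}$ and $(a,0)=(a,s)\cdot(1,0) \in \mathfrak{J}\cdot B$; symmetrically one gets $(0,b)\in\mathfrak{J}\cdot B$. Consequently $\mathfrak{J}^N B = \mathfrak{a}_i^N \times \mathfrak{b}_j^N$ and $\mathfrak{J}^N B \cap A = \mathfrak{a}_i^N \times_{\mathfrak{c}_{i,j}^N} \mathfrak{b}_j^N$, and Artin--Rees produces, for each $n$, some $N$ with $\mathfrak{a}_i^N \times_{\mathfrak{c}_{i,j}^N} \mathfrak{b}_j^N \subseteq \mathfrak{J}^n$. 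This is the desired cofinality, so $A$ is $\mathfrak{J}$-adic.

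The main obstacle is precisely the identity $\mathfrak{J}\cdot B = \mathfrak{a}_i \times \mathfrak{b}_j$: it is what allows the Artin--Rees step to translate the subspace topology from $B$ into an adic topology on $A$, and it rests squarely on the closed-immersion hypothesis via $\pi_R(\mathfrak{a}_i) = \mathfrak{c}_{i,j} = \pi_S(\mathfrak{b}_j)$. Everything else is a direct assembly of Theorem \ref{gluingFS}, Proposition \ref{noetherianafffine}, Lemma \ref{fiberpre-admi}, and the finiteness of the cover.
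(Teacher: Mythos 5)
Your proposal is correct and follows the same skeleton as the paper's proof: reduce to the affine cover $\mathcal{X}\sqcup_{\mathcal{Z}}\mathcal{Y}=\bigcup_{i,j}\Spf(R_i\times_{T_{i,j}}S_j)$ supplied by Theorem \ref{gluingFS}, invoke Proposition \ref{noetherianafffine} to get that $R_i$, $S_j$, $T_{i,j}$ are Noetherian and adic, and then establish that $R_i\times_{T_{i,j}}S_j$ is again Noetherian and adic. The difference lies entirely in that last step: the paper disposes of it by citing \cite[Lemma 2.1]{shirogoto}, whereas you prove it from scratch --- Noetherianness via the exact sequence $0\to\ker(\pi_R)\times 0\to A\to S_j\to 0$ squeezing $A$ between Noetherian $A$-modules, and adicness via the identity $\mathfrak{J}\cdot B=\mathfrak{a}_i\times\mathfrak{b}_j$ together with Artin--Rees applied to $A\subseteq B=R_i\times S_j$ to show that $\{\mathfrak{J}^n\}$ is cofinal with $\{\mathfrak{a}_i^n\times_{\mathfrak{c}_{i,j}^n}\mathfrak{b}_j^n\}$. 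Your route is more work but buys something real: the cited lemma is stated for surjections of Noetherian \emph{local} rings and only asserts that the fiber product is Noetherian and local, so it does not literally cover general Noetherian adic rings and says nothing about the adic condition, which is exactly the part your Artin--Rees argument supplies; you also record explicitly that the covers can be taken finite, giving quasi-compactness, a point the paper passes over. The one place to be slightly more careful is the normalization $\pi_R(\mathfrak{a}_i)=\mathfrak{c}_{i,j}=\pi_S(\mathfrak{b}_j)$: a priori the two images are merely two ideals of definition of $T_{i,j}$, so one should either adjust the ideals of definition to force equality or observe that $\mathfrak{J}\cdot B\supseteq\mathfrak{a}_i^m\times\mathfrak{b}_j^m$ for some $m$, which is all the cofinality argument needs; this is a routine fix, not a gap.
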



\begin{proof}
    Assuming the notation given in  Theorem \ref{gluingFS}, first the gluing of formal schemes is also a formal scheme. So for each $i,j$, it sufficient to show that $\OO_{\mathcal{X}\sqcup_{\mathcal{Z}}\mathcal{Y}}(U_i\sqcup_{W_{i,j}}V_j)$ is a Noetherian adic ring. Since $W_{i,j}:=\alpha^{-1}(U_i)=\beta^{-1}(V_j)$ is Noetherian formal affine open subsets of $\mathcal{Z}$, set $W_{i,j}=\Spf(T_{i,j})$.  Now, Propositon \ref{noetherianafffine} gives that  $T_{i,j}$ is Noetherian adic ring, as well as $R_i$ and $S_j$. By \cite[Lemma 2.1]{shirogoto}  we derive that $ R_i\times_{T_{i,j}}S_j$ is Noetherian adic ring. Since 
    $$
    U_i\sqcup_{W_{i,j}}V_j\cong \Spf(R_i\times_{T_{i,j}}S_j),
    $$
    the desired conclusion follows.
\end{proof}
It should be noted that the hypothesis imposed in the previous result is necessary in order to deduce the noetherianess of the gluing of formal schemes.
\begin{ex}\label{exnaonoeth}
{\rm Similarly to  \cite[Example 3.7]{KS},
If $k$ is a field, consider the Noetherian formal schemes $\mathcal{X}=\Spf(k[[x,y]])$, $\mathcal{Y}=\Spf(k)$ and $\mathcal{Z}=\Spf(k[[x,y]]/(x))$. Let $\OO_\mathcal{X}(\mathcal{X}) \twoheadrightarrow \OO_\mathcal{Z}(\mathcal{Z})$ and $\OO_\mathcal{Y}(\mathcal{Y}) \hookrightarrow \OO_\mathcal{Z}(\mathcal{Z})$ be two morphism of  rings. Note that the gluing $\mathcal{X}\sqcup_{\mathcal{Z}}\mathcal{Y}$ is not a Noetherian formal scheme, because $\mathcal{X}\sqcup_{\mathcal{Z}}\mathcal{Y}=\Spf(k[[x,xy,xy^2,xy^3,\dots]])$.
}
\end{ex}

\subsection*{The gluing of \(k\)-formal schemes}

\begin{defn}\label{k-fscheme} {\rm Let $k$ be an arbitrary field. We call $\mathcal{X}$ a $k$-formal scheme \textit{locally of finite type} or that $\mathcal{X}$ is locally of finite type over $k$, if there is
an affine formal open cover $\mathcal{X} =\cup_{i\in I}U_i$ such that for all $i$, $U_i=\Spf(R_i)$, $R_i$ is isomorphic to a quotient of a power series ring over the field with a suitable ideal. We say that $\mathcal{X}$ is of \textit {finite type over $k$} if $\mathcal{X}$ is locally of finite type and quasi-compact.
}
\end{defn}

\begin{question}\label{openproblem2}
{\rm Is the gluing of  $k$-formal schemes (finite type or locally of finite type)   a $k$-formal scheme (finite type or locally of finite type)?

}
\end{question}

Let $\mathcal{X}=\Spf(R)$, $\mathcal{Y}=\Spf(S)$ and $\mathcal{Z}=\Spf(T)$, where $(R,\mathfrak{m})$, $(S,\mathfrak{n})$ and $(T,\mathfrak{t})$ are $\mathfrak{m}$, $\mathfrak{n}$ and $\mathfrak{t}$-adic local Noetherian rings with the same residue field $k$, respectively. In addition, assume that $R \stackrel{\pi_R}\longrightarrow T \stackrel{\pi_S}\longleftarrow S$  are surjective homomorphisms of rings.

\begin{thm}\label{corglueformalafi}
    Consider $\mathcal{X}$, $\mathcal{Y}$ and $\mathcal{Z}$ as above.  Then,
$\mathcal{X}\sqcup_{\mathcal{Z}}\mathcal{Y}$ is an $k$-affine formal scheme of finite type.
\end{thm}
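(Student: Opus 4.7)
The plan is to reduce the statement to a direct application of the Cohen Structure Theorem combined with Proposition \ref{glueformalafi}. Since $\pi_R:R\twoheadrightarrow T$ and $\pi_S:S\twoheadrightarrow T$ are surjective, the induced morphisms of structure sheaves $\tilde{\alpha}:\OO_{\mathcal{X}}\to\alpha_{*}\OO_{\mathcal{Z}}$ and $\tilde{\beta}:\OO_{\mathcal{Y}}\to\beta_{*}\OO_{\mathcal{Z}}$ are surjective, so Proposition \ref{glueformalafi} yields $\mathcal{X}\sqcup_{\mathcal{Z}}\mathcal{Y}\cong\Spf(R\times_{T}S)$. Consequently it suffices to exhibit $R\times_{T}S$ as a quotient of a formal power series ring over $k$ by a suitable ideal; then $\mathcal{X}\sqcup_{\mathcal{Z}}\mathcal{Y}$ is automatically covered by one affine open of the required form, hence of finite type over $k$ in the sense of Definition \ref{k-fscheme} (quasi-compactness being trivial for a single formal affine).

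Next, I would verify that $R\times_{T}S$ satisfies the hypotheses of the Cohen Structure Theorem, namely that it is a complete local Noetherian ring with residue field $k$. Completeness (and linearly topologized admissibility) is exactly Theorem \ref{fibercomplete}. The Noetherianness and locality follow from \cite[Lemma 2.1]{shirogoto}, already invoked in the proof of Theorem \ref{gluingFS2}, applied to the surjective pullback data $R\to T\leftarrow S$. The unique maximal ideal of $R\times_{T}S$ is $\mathfrak{m}\times_{\mathfrak{t}}\mathfrak{n}$, and
\[
(R\times_{T}S)/(\mathfrak{m}\times_{\mathfrak{t}}\mathfrak{n})\;\cong\; k,
\]
because the three residue fields coincide with $k$ and the structural maps are $k$-linear, since each of $R$, $S$, $T$ is itself a $k$-algebra by applying Cohen Structure to each factor.

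Applying the Cohen Structure Theorem to $R\times_{T}S$ then produces some $r\in\mathbb{N}$ and some ideal $L\subseteq k[[w_{1},\ldots,w_{r}]]$ with
\[
R\times_{T}S\;\cong\;k[[w_{1},\ldots,w_{r}]]/L.
\]
Therefore $\mathcal{X}\sqcup_{\mathcal{Z}}\mathcal{Y}\cong\Spf(k[[w_{1},\ldots,w_{r}]]/L)$ is a $k$-affine formal scheme of finite type in the sense of Definition \ref{k-fscheme}, which completes the plan.

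The main delicate point I anticipate is the coincidence, up to topology, between the admissible structure on $R\times_{T}S$ obtained from the pullback filtration in Lemma \ref{fiberpre-admi} and the adic topology coming from its own maximal ideal $\mathfrak{m}\times_{\mathfrak{t}}\mathfrak{n}$. Without this identification one could not freely apply Cohen Structure and interpret the resulting quotient as the ring defining the same formal spectrum. The comparison is however routine once one uses that $R$, $S$, $T$ are adic with respect to their maximal ideals and the connecting maps are surjective $k$-algebra morphisms; this is precisely where all three hypotheses of the theorem (local adic, surjective, common residue field $k$) are simultaneously used.
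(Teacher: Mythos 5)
Your proposal is correct and follows essentially the same route as the paper: reduce to $\Spf(R\times_T S)$ via Proposition \ref{glueformalafi}, get Noetherianness and locality of $R\times_T S$ from \cite[Lemma 2.1]{shirogoto}, admissibility from Theorem \ref{fibercomplete}, and conclude with the Cohen Structure Theorem. The only difference is that you explicitly flag the comparison between the pullback filtration topology and the $\mathfrak{m}\times_{\mathfrak{t}}\mathfrak{n}$-adic topology, a point the paper's proof asserts without comment.
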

\begin{proof} By Proposition \ref{glueformalafi}, one has
$\mathcal{X}\sqcup_{\mathcal{Z}}\mathcal{Y}=\Spf( R\times_TS).$ The assumption that the maps $R \stackrel{\pi_R}\longrightarrow T \stackrel{\pi_S}\longleftarrow S$  are surjective homomorphisms of Noetherian local rings, \cite[Lemma 1.2]{AAM} (or \cite[Lemma 2.1]{shirogoto}) provides that  $R \times_T S$ is Noetherian and local ring with maximal ideal $\mathfrak{m}\times_{\mathfrak{t}}\mathfrak{n}$. Also, since $R\times_TS$ is the $\mathfrak{m}\times_{\mathfrak{t}}\mathfrak{n}$-adic and admissible (Theorem \ref{fibercomplete}), the  Cohen-Structure Theorem gives that  $R\times_TS$ is isomorphic to a quotient of a power series ring over the field $k$  with a suitable ideal.
\end{proof}

As a consequence of the previous result (or Example \ref{exglu} (3)), one obtains the following:
\begin{cor}
Let \( \mathcal{X} \), \( \mathcal{Y} \), and \( \mathcal{Z} \) be locally formal \( k \)-schemes such that \( \mathcal{Z} \to \mathcal{X} \) and \( \mathcal{Z} \to \mathcal{Y} \) are closed immersions. Then, \( \mathcal{X} \sqcup_{\mathcal{Z}} \mathcal{Y} \) is a locally formal \( k \)-scheme.
\end{cor}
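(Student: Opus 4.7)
The plan is to combine the local structure result (Theorem \ref{corglueformalafi}) with the gluing construction already carried out in Theorem \ref{gluingFS}. Since a closed immersion is, in particular, a homeomorphism onto its image, Theorem \ref{gluingFS} immediately gives that $\mathcal{X} \sqcup_{\mathcal{Z}} \mathcal{Y}$ is a formal scheme. What remains is to exhibit an affine formal open cover in which every member is the formal spectrum of a quotient of a power series ring over $k$, i.e., to verify Definition \ref{k-fscheme}.

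First, I would use Definition \ref{k-fscheme} applied to $\mathcal{X}$ and $\mathcal{Y}$ to choose affine formal open covers $\mathcal{X} = \bigcup_i U_i$ and $\mathcal{Y} = \bigcup_j V_j$ with $U_i = \Spf(R_i)$ and $V_j = \Spf(S_j)$, where each $R_i$ and $S_j$ is a quotient of a power series ring over $k$ by some ideal; by Example \ref{exglu}(1) these are admissible local rings with residue field $k$. Then, following the compatibility step in the proof of Theorem \ref{gluingFS} via \cite[Lemma 2.4]{KS}, I would cover $\mathcal{Z}$ by affine formal opens $W_{i,j} = \alpha^{-1}(U_i) = \beta^{-1}(V_j) = \Spf(T_{i,j})$ (refining if necessary so that the $T_{i,j}$ are again quotients of power series rings over $k$, using that $\mathcal{Z}$ is itself locally formal over $k$).

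The closed immersion hypotheses on $\alpha$ and $\beta$ translate, at the affine level, into surjective ring homomorphisms $\pi_{R_i} : R_i \twoheadrightarrow T_{i,j}$ and $\pi_{S_j} : S_j \twoheadrightarrow T_{i,j}$ between local Noetherian rings with common residue field $k$. Theorem \ref{corglueformalafi} then applies to each such triple and yields
\[
U_i \sqcup_{W_{i,j}} V_j \;\cong\; \Spf\bigl(R_i \times_{T_{i,j}} S_j\bigr),
\]
where, as in Example \ref{exglu}(3), the Cohen Structure Theorem identifies $R_i \times_{T_{i,j}} S_j$ with a quotient of a power series ring over $k$. Assembling these pieces gives an affine formal open cover $\mathcal{X} \sqcup_{\mathcal{Z}} \mathcal{Y} = \bigcup_{i,j}\bigl(U_i \sqcup_{W_{i,j}} V_j\bigr)$ of the desired form, which is exactly the definition of a locally formal $k$-scheme.

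The main obstacle is the compatibility of the chosen affine covers: guaranteeing that $\alpha^{-1}(U_i)$ and $\beta^{-1}(V_j)$ can be simultaneously realized as affine formal opens of $\mathcal{Z}$ whose coordinate rings remain quotients of power series rings over $k$. This is, however, already the technical content of the proof of Theorem \ref{gluingFS}, made available precisely by the closed immersion hypothesis together with \cite[Lemma 2.4]{KS}. Once that step is invoked, the conclusion reduces to applying Theorem \ref{corglueformalafi} patchwise.
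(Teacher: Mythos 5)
Your proposal is correct and follows essentially the same route as the paper, which states this corollary without a written proof as a direct consequence of Theorem \ref{corglueformalafi} (equivalently Example \ref{exglu}(3)) applied patchwise over the affine cover $\bigcup_{i,j}\bigl(U_i \sqcup_{W_{i,j}} V_j\bigr)$ produced in the proof of Theorem \ref{gluingFS}. Your write-up simply makes explicit the localization step and the use of the Cohen Structure Theorem that the paper leaves implicit.
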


\section{Singular behavior of the gluing  of formal schemes}

The next lemma is an important result for the rest of this work. For the rest of this section, we are assuming that all formal schemes are locally Noetherian.
\begin{lem}\label{stalklemma}
Let \( \mathcal{X} \), \( \mathcal{Y} \), and \( \mathcal{Z} \) be formal schemes such that \( \alpha: \mathcal{Z} \to \mathcal{X} \) and \( \beta: \mathcal{Z} \to \mathcal{Y} \) are closed immersions of formal schemes. Let \( z \in \mathcal{Z} \) and \( w \in \mathcal{X} \sqcup_{\mathcal{Z}} \mathcal{Y} \) such that \( w = \alpha(z) \sqcup_z \beta(z) \). Then,
\[
\mathcal{O}_{\mathcal{X} \sqcup_{\mathcal{Z}} \mathcal{Y}, w} \cong \mathcal{O}_{\mathcal{X}, \alpha(z)} \times_{\mathcal{O}_{\mathcal{Z}, z}} \mathcal{O}_{\mathcal{Y}, \beta(z)}.
\]
\end{lem}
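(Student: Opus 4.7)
The plan is to reduce the statement to an affine computation via Theorem \ref{gluingFS}, and then to compute the stalk of $\Spf$ of a fiber product using Remark \ref{stalk} together with Theorem \ref{fibercomplete}. Since stalks are a local invariant, I would first pick an affine open $U = \Spf(R) \subseteq \mathcal{X}$ containing $\alpha(z)$ and $V = \Spf(S) \subseteq \mathcal{Y}$ containing $\beta(z)$, and set $W := \alpha^{-1}(U) \cap \beta^{-1}(V) = \Spf(T)$, shrinking as necessary so that $W$ is affine and contains $z$. The closed-immersion hypothesis on $\alpha$ and $\beta$ forces the induced continuous maps $R \twoheadrightarrow T$ and $S \twoheadrightarrow T$ to be surjective. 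By Proposition \ref{glueformalafi}, $U \sqcup_W V \cong \Spf(R \times_T S)$ is an open formal subscheme of $\mathcal{X} \sqcup_\mathcal{Z} \mathcal{Y}$ containing $w$, so it suffices to identify the stalk at $w$ of this affine formal scheme.

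Next, I would translate everything into prime ideals. Let $\p_R$, $\p_S$, $\q$ be the open primes of $R$, $S$, $T$ corresponding to $\alpha(z)$, $\beta(z)$, $z$; then $w$ corresponds to the open prime $\p := \eta_R^{-1}(\p_R) = \eta_S^{-1}(\p_S)$ of $R \times_T S$. By Remark \ref{stalk}, the stalks in question are complete localizations: $\OO_{\mathcal{X},\alpha(z)} \cong R_{\{\p_R\}}$, $\OO_{\mathcal{Y},\beta(z)} \cong S_{\{\p_S\}}$, $\OO_{\mathcal{Z},z} \cong T_{\{\q\}}$, and $\OO_{\mathcal{X} \sqcup_\mathcal{Z} \mathcal{Y}, w} \cong (R \times_T S)_{\{\p\}}$. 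So the lemma is equivalent to the ring isomorphism
$$(R \times_T S)_{\{\p\}} \;\cong\; R_{\{\p_R\}} \times_{T_{\{\q\}}} S_{\{\p_S\}}.$$
To prove this, I would first establish the analogous formula for ordinary localization, $(R \times_T S)_\p \cong R_{\p_R} \times_{T_\q} S_{\p_S}$, which is routine: an element $(r',s') \notin \p$ forces $r' \notin \p_R$ and $s' \notin \p_S$, and the pullback condition $\pi_R(r) = \pi_S(s)$ is preserved under localization. Then I would pass to completions: the topology on the localized fiber product, inherited from the filtration $\{\ia_\lambda \times_{\ic_\alpha} \ib_\beta\}$ used in the definition of the completed fiber product, coincides with the natural product topology obtained by first localizing each of $R$, $S$, $T$ at the corresponding prime. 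Theorem \ref{fibercomplete}, applied to the three admissible rings $R_{\{\p_R\}}$, $S_{\{\p_S\}}$, $T_{\{\q\}}$, then delivers the claimed isomorphism.

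The main obstacle will be the last step: verifying carefully that the topological structures match, i.e., that the filtration on $(R \times_T S)_\p$ defining its complete localization agrees with the product filtration used to form the completed fiber product of $R_{\{\p_R\}}$, $S_{\{\p_S\}}$, $T_{\{\q\}}$. Once this compatibility of topologies is in hand, Theorem \ref{fibercomplete} converts the isomorphism of uncompleted localized fiber products into the required isomorphism of complete localizations, and the conclusion follows from Remark \ref{stalk}.
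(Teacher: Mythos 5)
Your proposal is correct in outline and shares the paper's overall skeleton --- reduce to the affine case, then use Remark \ref{stalk} to express all four stalks as complete localizations --- but the technical core is executed differently. The paper never localizes at the prime and then completes: instead it works with the directed system of complete localizations $R_{\{f\}}$, $S_{\{g\}}$, $T_{\{t\}}$ indexed by elements $h=(f,g)\notin \mathfrak{p}_R\sqcup_{\mathfrak{p}_T}\mathfrak{p}_S$, identifies the three direct limits with the corresponding stalks, and then applies exactness of direct limits to the short exact sequences $0\to (R\times_TS)_{\{h\}}\to R_{\{f\}}\oplus S_{\{g\}}\to T_{\{t\}}\to 0$ to land directly on $0\to \mathcal{O}_{\mathcal{X}\sqcup_{\mathcal{Z}}\mathcal{Y},w}\to \mathcal{O}_{\mathcal{X},\alpha(z)}\oplus\mathcal{O}_{\mathcal{Y},\beta(z)}\to \mathcal{O}_{\mathcal{Z},z}\to 0$, which exhibits the stalk as the fiber product. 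Your route instead proves the uncompleted isomorphism $(R\times_TS)_{\mathfrak{p}}\cong R_{\mathfrak{p}_R}\times_{T_{\mathfrak{q}}}S_{\mathfrak{p}_S}$ and then passes to completions via Theorem \ref{fibercomplete}. Both work; the paper's version buys you the topological compatibility for free (each $(R\times_TS)_{\{h\}}$ is already a completed fiber product by construction, so no separate filtration-matching is needed), whereas your version isolates a cleaner, purely algebraic statement about localization of fiber products at corresponding primes, at the cost of the verification you yourself flag --- that the filtration $\{S^{-1}(\mathfrak{a}_\lambda\times_{\mathfrak{c}_\alpha}\mathfrak{b}_\beta)\}$ on the localized fiber product agrees with the product filtration on $R_{\mathfrak{p}_R}\times_{T_{\mathfrak{q}}}S_{\mathfrak{p}_S}$. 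That check is routine given the surjectivity coming from the closed-immersion hypothesis, so I see no genuine gap, only a step you should write out rather than defer.
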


\begin{proof}
Since the stalk is a local property, it is sufficient to verify the statement in the affine formal case. Let \( \mathcal{X} = \Spf(R) \), \( \mathcal{Y} = \Spf(S) \), and \( \mathcal{Z} = \Spf(T) \), where \( R \), \( S \), and \( T \) are commutative admissible ring . By the diagram 
\[
\xymatrix{R \times_T S \ar[r]^-{\pi_R}\ar[d]_{\pi_S} & R  \ar[d]^{\varepsilon_R}\\
            S  \ar[r]^{\varepsilon_S} & T, }
\]
\noindent with surjective arrows, we have that each element \( h \in R \times_T S \) corresponds to elements \( f \in R \) and \( g \in S \) such that \( \varepsilon_R(f) = \varepsilon_S(g) =: t \). By Remark \ref{stalk},  as \( (R \times_T S)_{\{h\}} \)-modules, we obtain \( S_h = S_g \), \( R_h = R_f \), and \( T_{\varepsilon_R(f)} = T_{\varepsilon_S(g)} = T_h \). Let \( z = [\mathfrak{p}_T] \in \mathcal{Z} \), \( \alpha(z) = [\mathfrak{p}_R] \in \mathcal{X} \), and \( \beta(z) = [\mathfrak{p}_S] \in \mathcal{Y} \) such that \( w = \alpha(z) \sqcup_z \beta(z) \). Then,
\[
\varinjlim_{h \notin \mathfrak{p}_R \sqcup_{\mathfrak{p}_T} \mathfrak{p}_S} R_{\{h\}} = \varinjlim_{f \notin \mathfrak{p}_R} R_{\{f\}}, \quad \varinjlim_{h \notin \mathfrak{p}_R \sqcup_{\mathfrak{p}_T} \mathfrak{p}_S} S_{\{h\}} = \varinjlim_{g \notin \mathfrak{p}_S} S_{\{g\}}\,\,{\rm  
and 
}
\varinjlim_{h \notin \mathfrak{p}_R \sqcup_{\mathfrak{p}_T} \mathfrak{p}_S} T_{\{h\}} = \varinjlim_{t \notin \mathfrak{p}_T} T_{\{t\}}.
\]
Again, by Remark \ref{stalk} and the exactness of direct limits, we obtain the following exact sequence:
\[
0 \longrightarrow \mathcal{O}_{\mathcal{X} \sqcup_{\mathcal{Z}} \mathcal{Y}, w} \longrightarrow \mathcal{O}_{\mathcal{X}, \alpha(z)} \oplus \mathcal{O}_{\mathcal{Y}, \beta(z)} \longrightarrow \mathcal{O}_{\mathcal{Z}, z} \longrightarrow 0.
\]
This yields the desired result.
\end{proof}

For a Noetherian local ring $(R, \mathfrak{m})$, the minimal number of generators of $\mathfrak{m}$ will be denoted by $\edim(R):=\dim_{\C}\mathfrak{m}/\mathfrak{m}^2$ and is
called the embedding dimension of $R$. Recall that $\edim(R) \geq \dim(R)$. If the equality happens, then $R$ is called a regular local ring.
\medskip 

Recall that a locally Noetherian formal scheme $\mathcal{X}$ is said to be  \textit{nonsingular} (or \textit{regular}) at $x\in \mathcal{X}$  if  $\OO_{\mathcal{X},x}$ is a regular ring, otherwise, we say that $\mathcal{X}$ is \textit{singular at $x$}. We say that $\mathcal{X}$ is \textit{regular} if it is regular at all of its points.

In order to show the main result of this section, first we summarize some algebraic tools.

\begin{defn}{\rm  Let $M$ be a finitely generated $\OO_{\mathcal{X},x}$-module. The Poincar\'e series of $M$ is given by
$$P_{M}^{\OO_{\mathcal{X},x}}(t):= \sum_{i\geq 0}\dim_{k(x)} {\rm Tor}_i^{\OO_{\mathcal{X},x}}\left(M, k(x) \right) t^i,$$
where $k(x):= \frac{\OO_{\mathcal{X},x}}{\mathfrak{m}_{X,x}}$ is the residue field. The number $\beta_i^{\OO_{
\mathcal{X},x}}(M):=\dim_{k(x)} {\rm Tor}_i^{\OO_{
\mathcal{X},x}}\left(M, k(x)\right)$ is called $i$-th Betti number of $M$.   

}

\end{defn}


\begin{rem}\label{rem3.3}{\rm
Let $M$ be a  $\OO_{\mathcal{X},x}$-module  with minimal number of generators $\mu(M)$. Then 
$$P^{\OO_{\mathcal{X},x}}_M(t)=\mu(M)+tP^{\OO_{\mathcal{X},x}}_{\Omega_1}(t),$$
 where $\Omega_1$  denotes the first syzygy of $M$ (see \cite{DK75}).

}
\end{rem}

\begin{rem}\label{remlev} {\rm  
\begin{itemize}
\item[(i)]
Let $F(t) = \sum
a_it^i$ and $G(t) = \sum
b_it^i$ be two power series in $t$. We say that $F(t)\succeq G(t)$ provided  $a_i \geq b_i$ for all $i$.

\item[(ii)] Let $M$ be any $\OO_{\mathcal{Y},y}$-module. By \cite[Proof of Theorem 1.1]{levin},  {\rm if $f: \OO_{\mathcal{X},x}\to \OO_{\mathcal{Y},y}$ is a surjective homomorphism, one has the spectral sequence
$${\rm Tor}_p^{\OO_{\mathcal{Y},y}}(M,k_y) \otimes {\rm Tor}_q^{\OO_{\mathcal{X},x}}(\OO_{\mathcal{Y},y},k_y) \Rightarrow {\rm Tor}_{p+q}^{\OO_{\mathcal{X},x}}(M,k_x).$$ Therefore, one obtains that 
$$P_{M}^{{\OO_{\mathcal{X},x}}}\succeq P_{M}^{{\OO_{\mathcal{Y},y}}}P_{{\OO_{\mathcal{Y},y}}}^{{\OO_{\mathcal{X},x}}}.$$

}
\end{itemize}
}
\end{rem}


We are able to show the main result of this section.

\begin{thm}  \label{Singularformal} Let $\mathcal{X}$, $\mathcal{Y}$ and $\mathcal{Z}$ be formal schemes such that \( \alpha: \mathcal{Z} \to \mathcal{X} \) and \( \beta: \mathcal{Z} \to \mathcal{Y} \) are closed immersions of formal schemes. Then, there exists a singular point $w \in  \mathcal{X}\sqcup_{\mathcal{Z}}\mathcal{Y}$.
\end{thm}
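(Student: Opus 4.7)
The plan is to exhibit a specific singular point on the seam of the gluing by identifying its stalk with a nontrivial fiber product via Lemma \ref{stalklemma}, and then ruling out regularity by producing nonzero zero divisors there, invoking the classical fact that every regular local ring is an integral domain.

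First, I fix an arbitrary $z \in \mathcal{Z}$ and set $w = \alpha(z) \sqcup_z \beta(z) \in \mathcal{X} \sqcup_{\mathcal{Z}} \mathcal{Y}$. Write $R = \mathcal{O}_{\mathcal{X}, \alpha(z)}$, $S = \mathcal{O}_{\mathcal{Y}, \beta(z)}$ and $T = \mathcal{O}_{\mathcal{Z}, z}$; because $\alpha$ and $\beta$ are closed immersions, the induced maps $\pi_R : R \to T$ and $\pi_S : S \to T$ on stalks are surjective local homomorphisms. Lemma \ref{stalklemma} then identifies the stalk of the gluing at $w$ with $A := R \times_T S$.

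Next, I produce nonzero zero divisors in $A$. By Remark \ref{assumption}, at a genuine gluing point the three stalks are pairwise distinct; in particular $R \ne T$ and $S \ne T$, and combined with the surjectivity of $\pi_R$ and $\pi_S$ this forces $\ker(\pi_R) \ne 0$ and $\ker(\pi_S) \ne 0$. Choosing nonzero elements $r \in \ker(\pi_R)$ and $s \in \ker(\pi_S)$, the pairs $(r, 0)$ and $(0, s)$ both satisfy the defining compatibility $\pi_R(\cdot) = \pi_S(\cdot)$ of the fiber product, so they lie in $A$; they are nonzero in $A$, while their componentwise product equals $(0, 0)$.

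Since every regular local ring is an integral domain, the existence of these nonzero zero divisors rules out the regularity of the local ring $A = \mathcal{O}_{\mathcal{X} \sqcup_{\mathcal{Z}} \mathcal{Y}, w}$, exhibiting $w$ as the desired singular point. The one subtle step is ensuring that the fiber product is genuinely nontrivial at $w$, which is precisely what Remark \ref{assumption} guarantees. A longer homological alternative would apply Remark \ref{remlev}(ii) to the surjection $A \twoheadrightarrow R$ to obtain $P_k^A(t) \succeq P_k^R(t) P_R^A(t)$, and then use the kernel $\ker(A \to R)$ together with its nonzero annihilator inside $A$ to derive $\pdim_A(R) = \infty$ by a periodic-syzygy argument, forcing $\pdim_A(k) = \infty$; but this route is overkill for the bare existence statement.
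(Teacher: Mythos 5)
Your argument is correct, and it reaches the conclusion by a genuinely different and more elementary route than the paper. The paper also starts from Lemma \ref{stalklemma} and Remark \ref{assumption}, but then argues by contradiction with homological machinery: it combines the dimension formula of Remark \ref{dimdepth} with a chain of Poincar\'e-series inequalities (Remarks \ref{rem3.3} and \ref{remlev}, following Dress--Kr\"amer and Levin) to show that regularity of the stalk at $w$ would force $\beta_1^{\mathcal{O}_{\mathcal{Y},\beta(z)}}(\mathcal{O}_{\mathcal{Z},z})=0$, hence $\mathcal{O}_{\mathcal{Y},\beta(z)}\cong\mathcal{O}_{\mathcal{Z},z}$, contradicting Remark \ref{assumption}. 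Your zero-divisor argument short-circuits all of this: once the two projection kernels are nonzero --- which, as you correctly note, follows from surjectivity of the stalk maps of a closed immersion together with Remark \ref{assumption} --- the elements $(r,0)$ and $(0,s)$ are nonzero elements of $\mathcal{O}_{\mathcal{X},\alpha(z)}\times_{\mathcal{O}_{\mathcal{Z},z}}\mathcal{O}_{\mathcal{Y},\beta(z)}$ with product zero, and a regular local ring is a domain. (The standing hypothesis of the section that all formal schemes are locally Noetherian is what makes ``regular'' meaningful here, and the fiber product of Noetherian local rings along surjections is again Noetherian local, so the classical fact applies.) What the paper's longer route buys is quantitative information: the inequality (\ref{eqth35}) shows that the embedding dimension of the glued stalk exceeds $\edim\mathcal{O}_{\mathcal{X},\alpha(z)}$ by at least $\beta_1^{\mathcal{O}_{\mathcal{Y},\beta(z)}}(\mathcal{O}_{\mathcal{Z},z})$, i.e.\ it measures \emph{how} singular the seam is, and the Poincar\'e-series estimates are reusable for Betti-number computations of gluings; your approach establishes only the bare existence of a singular point, which is all the theorem asserts.
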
 


\begin{proof} By contradiction, suppose that $\mathcal{X}\sqcup_{\mathcal{Z}}\mathcal{Y}$ is not singular for all point. Let \( z \in \mathcal{Z} \) and \( w \in \mathcal{X} \sqcup_{\mathcal{Z}} \mathcal{Y} \) such that \( w = \alpha(z) \sqcup_z \beta(z) \). In particular, $\mathcal{X}\sqcup_{\mathcal{Z}}\mathcal{Y}$ is not singular in $w$, i.e., $\mathcal{O}_{\mathcal{X} \sqcup_{\mathcal{Z}} \mathcal{Y}, w}$ is a regular ring. By Lemma \ref{stalklemma}, one has 
$$\mathcal{O}_{\mathcal{X} \sqcup_{\mathcal{Z}} \mathcal{Y}, w} \cong \mathcal{O}_{\mathcal{X}, \alpha(z)} \times_{\mathcal{O}_{\mathcal{Z}, z}} \mathcal{O}_{\mathcal{Y}, \beta(z)}.$$
By Remark \ref{dimdepth}, without loss of generality  we may assume  that $\dim \mathcal{O}_{\mathcal{X} \sqcup_{\mathcal{Z}} \mathcal{Y}, w} =\dim \mathcal{O}_{\mathcal{X}, \alpha(z)}$.
Hence, since  $\mathcal{O}_{\mathcal{X} \sqcup_{\mathcal{Z}} \mathcal{Y}, w}$ is a regular ring, we have that 
\begin{equation}\label{eqth31}
   \dim \mathcal{O}_{\mathcal{X}, \alpha(z)}= \dim \mathcal{O}_{\mathcal{X}, \alpha(z)} \times_{\mathcal{O}_{\mathcal{Z}, z}} \mathcal{O}_{\mathcal{Y}, \beta(z)} =\edim\mathcal{O}_{\mathcal{X}, \alpha(z)} \times_{\mathcal{O}_{\mathcal{Z}, z}} \mathcal{O}_{\mathcal{Y}, \beta(z)}.
\end{equation}

From the exact sequence
\begin{equation}\label{th310}
    0\longrightarrow \ker\beta_z^{\ast}\longrightarrow \mathcal{O}_{\mathcal{X}, \alpha(z)} \times_{\mathcal{O}_{\mathcal{Z}, z}} \mathcal{O}_{\mathcal{Y}, \beta(z)}\overset{\pi_1}{\longrightarrow} \mathcal{O}_{\mathcal{X}, \alpha(z)}\longrightarrow 0,
\end{equation}
and Remark \ref{rem3.3}, one has
    \begin{equation}\label{th3200}
    P_{\mathcal{O}_{\mathcal{X}, \alpha(z)}}^{\mathcal{O}_{\mathcal{X}, \alpha(z)} \times_{\mathcal{O}_{\mathcal{Z}, z}} \mathcal{O}_{\mathcal{Y}, \beta(z)}}(t)=1+tP^{\mathcal{O}_{\mathcal{X}, \alpha(z)} \times_{\mathcal{O}_{\mathcal{Z}, z}} \mathcal{O}_{\mathcal{Y}, \beta(z)}}_{\ker{\beta_z^{\ast}}}(t).
\end{equation}
The surjectivity of the map $\pi_2:\mathcal{O}_{\mathcal{X}, \alpha(z)} \times_{\mathcal{O}_{\mathcal{Z}, z}} \mathcal{O}_{\mathcal{Y}, \beta(z)}\longrightarrow \mathcal{O}_{\mathcal{X}, \alpha(z)}$ and the Remark \ref{remlev} give
\begin{equation}\label{th3300}
    tP^{\mathcal{O}_{\mathcal{X}, \alpha(z)} \times_{\mathcal{O}_{\mathcal{Z}, z}} \mathcal{O}_{\mathcal{Y}, \beta(z)}}_{\ker\beta_z^{\ast}}(t) \succeq tP_{\ker{\beta_z^{\ast}}}^{\mathcal{O}_{\mathcal{Y}, \beta(z)}}(t)P_{\mathcal{O}_{\mathcal{Y}, \beta(z)}}^{\mathcal{O}_{\mathcal{X}, \alpha(z)} \times_{\mathcal{O}_{\mathcal{Z}, z}} \mathcal{O}_{\mathcal{Y}, \beta(z)}}(t)=(P_{\mathcal{O}_{\mathcal{Z}, z}}^{\mathcal{O}_{\mathcal{Y}, \beta(z)}}(t)-1)P^{\mathcal{O}_{\mathcal{X}, \alpha(z)} \times_{\mathcal{O}_{\mathcal{Z}, z}} \mathcal{O}_{\mathcal{Y}, \beta(z)}}_{\mathcal{O}_{\mathcal{Y}, \beta(z)}}(t).
\end{equation}
Therefore,
\begin{equation}\label{th340}
    P_{\mathcal{O}_{\mathcal{X}, \alpha(z)}}^{\mathcal{O}_{\mathcal{X}, \alpha(z)} \times_{\mathcal{O}_{\mathcal{Z}, z}} \mathcal{O}_{\mathcal{Y}, \beta(z)}}(t)\succeq 1+(P_{\mathcal{O}_{\mathcal{Z}, z}}^{\mathcal{O}_{\mathcal{Y}, \beta(z)}}(t)-1)P^{\mathcal{O}_{\mathcal{X}, \alpha(z)} \times_{\mathcal{O}_{\mathcal{Z}, z}} \mathcal{O}_{\mathcal{Y}, \beta(z)}}_{\mathcal{O}_{\mathcal{Y}, \beta(z)}}(t).
\end{equation}
A similar argument with the surjective map  $\pi_1:\mathcal{O}_{\mathcal{X}, \alpha(z)} \times_{\mathcal{O}_{\mathcal{Z}, z}} \mathcal{O}_{\mathcal{Y}, \beta(z)}\longrightarrow \mathcal{O}_{\mathcal{X}, \alpha(z)}$ provides
\begin{equation}\label{th3300}
 P_{\mathcal{O}_{\mathcal{Y}, \beta(z)}}^{\mathcal{O}_{\mathcal{X}, \alpha(z)} \times_{\mathcal{O}_{\mathcal{Z}, z}} \mathcal{O}_{\mathcal{Y}, \beta(z)}}(t) \succeq  1+(P_{\mathcal{O}_{\mathcal{Z}, z}}^{\mathcal{O}_{\mathcal{X}, \alpha(z)}}(t)-1)P^{\mathcal{O}_{\mathcal{X}, \alpha(z)} \times_{\mathcal{O}_{\mathcal{Z}, z}} \mathcal{O}_{\mathcal{Y}, \beta(z)}}_{\mathcal{O}_{\mathcal{X}, \alpha(z)}}(t).
\end{equation}
Replacing   (\ref{th3300}) in   (\ref{th340}) one obtains
\begin{equation}\label{eqth32}
P_{\mathcal{O}_{\mathcal{X}, \alpha(z)}}^{\mathcal{O}_{\mathcal{X}, \alpha(z)} \times_{\mathcal{O}_{\mathcal{Z}, z}} \mathcal{O}_{\mathcal{Y}, \beta(z)}}(t)\succeq\frac{P_{\mathcal{O}_{\mathcal{Z}, z}}^{\mathcal{O}_{\mathcal{Y}, \beta(z)}}(t)}{P_{\mathcal{O}_{\mathcal{Z}, z}}^{\mathcal{O}_{\mathcal{X}, \alpha(z)}}(t)+P_{\mathcal{O}_{\mathcal{Z}, z}}^{\mathcal{O}_{\mathcal{Y}, \beta(z)}}(t)-P_{\mathcal{O}_{\mathcal{Z}, z}}^{\mathcal{O}_{\mathcal{X}, \alpha(z)}}(t)P_{\mathcal{O}_{\mathcal{Z}, z}}^{\mathcal{O}_{\mathcal{Y}, \beta(z)}}(t)}.
\end{equation}
By Remark \ref{remlev} (ii),   $P^{\mathcal{O}_{\mathcal{X}, \alpha(z)} \times_{\mathcal{O}_{\mathcal{Z}, z}} \mathcal{O}_{\mathcal{Y}, \beta(z)}}_{M}(t)\succeq P^{\mathcal{O}_{\mathcal{X}, \alpha(z)}}_{M}(t)P^{\mathcal{O}_{\mathcal{X}, \alpha(z)} \times_{\mathcal{O}_{\mathcal{Z}, z}} \mathcal{O}_{\mathcal{Y}, \beta(z)}}_{\mathcal{O}_{\mathcal{X}, \alpha(z)}}(t)$ for any $\mathcal{O}_{\mathcal{X}, \alpha(z)}$-module $M$. Now, multiplying both sides of equation (\ref{eqth32}) by $P^{\mathcal{O}_{\mathcal{X}, \alpha(z)}}_{M}(t)$ we derive

$$P^{\mathcal{O}_{\mathcal{X}, \alpha(z)} \times_{\mathcal{O}_{\mathcal{Z}, z}} \mathcal{O}_{\mathcal{Y}, \beta(z)}}_{M}(t) \succeq \frac{P^{\mathcal{O}_{\mathcal{X}, \alpha(z)}}_{M}(t)P^{\mathcal{O}_{\mathcal{Y}, \beta(z)}}_{\mathcal{O}_{\mathcal{Z}, z}}(t)}
{P^{\mathcal{O}_{\mathcal{X}, \alpha(z)}}_{\mathcal{O}_{\mathcal{Z}, z}}(t)+P^{\mathcal{O}_{\mathcal{Y}, \beta(z)}}_{\mathcal{O}_{\mathcal{Z}, z}}(t)-P^{\mathcal{O}_{\mathcal{X}, \alpha(z)}}_{\mathcal{O}_{\mathcal{Z}, z}}(t)P^{\mathcal{O}_{\mathcal{Y}, \beta(z)}}_{\mathcal{O}_{\mathcal{Z}, z}}(t)}.
$$
This yields the inequality
\begin{equation}\label{eqth34}
     \beta_1^{\mathcal{O}_{\mathcal{X}, \alpha(z)} \times_{\mathcal{O}_{\mathcal{Z}, z}} \mathcal{O}_{\mathcal{Y}, \beta(z)}}(M)\geq  \beta_0^{\mathcal{O}_{\mathcal{X}, \alpha(z)}}(M)\beta_1^{\mathcal{O}_{\mathcal{Y}, \beta(z)}}(\mathcal{O}_{\mathcal{Z}, z})+\beta_1^{\mathcal{O}_{\mathcal{X}, \alpha(z)}}(M), 
\end{equation}
for any $\mathcal{O}_{\mathcal{X}, \alpha(z)}$-module $M$. In particular, if $M$ is the residue field, we have that
\begin{equation}\label{eqth35}
\edim(\mathcal{O}_{\mathcal{X}, \alpha(z)} \times_{\mathcal{O}_{\mathcal{Z}, z}} \mathcal{O}_{\mathcal{Y}, \beta(z)})\geq \beta_1^{\mathcal{O}_{\mathcal{Y}, \beta(z)}}(\mathcal{O}_{\mathcal{Z}, z})+\edim(X,x).
\end{equation}
Since $\edim{\mathcal{O}_{\mathcal{X}, \alpha(z)}}\geq \dim\mathcal{O}_{\mathcal{X}, \alpha(z)}$,  equations (\ref{eqth31}) and (\ref{eqth35}) provides $\beta_1^{\mathcal{O}_{\mathcal{Y}, \beta(z)}}(Z,z)=0$. But,  if $\beta_1^{\mathcal{O}_{\mathcal{Y}, \beta(z)}}(\OO_{\mathcal{Z},z})=0$, then $\OO_{\mathcal{Z},z}$ is a free $\mathcal{O}_{\mathcal{Y}, \beta(z)}$-module. The surjective map $\mathcal{O}_{\mathcal{Y}, \beta(z)}\stackrel{\beta_{z}^{\ast}}\to \OO_{\mathcal{Z},z}$  and the fact that $\OO_{Z,z}\cong\mathcal{O}_{\mathcal{Y}, \beta(z)}^{\oplus r}$, implies that $r=1$, i.e., $\mathcal{O}_{\mathcal{Y}, \beta(z)}\cong\OO_{\mathcal{Z},z}$. This yields a trivial fiber product $\mathcal{O}_{\mathcal{X}, \alpha(z)} \times_{\mathcal{O}_{\mathcal{Z}, z}} \mathcal{O}_{\mathcal{Y}, \beta(z)}$, which is a contradiction (Remark \ref{assumption}).
\end{proof}

\begin{ex}\label{exCI}
{\rm If $k$ is a field, consider the Noetherian formal schemes $\mathcal{X}=\Spf(k[[x]])$, $\mathcal{Y}=\Spf(k[[y]])$ and $\mathcal{Z}=\Spf(k)$. Let $\OO_\mathcal{X}(\mathcal{X}) \twoheadrightarrow \OO_\mathcal{Z}(\mathcal{Z})$ and $\OO_\mathcal{Y}(\mathcal{Y}) \twoheadrightarrow \OO_\mathcal{Z}(\mathcal{Z})$ be two natural morphism of  rings. Note that  $\mathcal{X}$ and $\mathcal{Y}$ are nonsingular formal schemes and  the gluing $\mathcal{X}\sqcup_{\mathcal{Z}}\mathcal{Y}=\Spf(k[[x,y]]/(xy))$  is a singular  formal scheme (see Remark \ref{Union}) (ii)). This example illustrates that even considering the most simple gluing of nonsingular formal schemes, we derive a  singular formal scheme.
}
\end{ex}

\end{document}